\newtheorem{theorem}{Theorem}[section]
\newtheorem{corollary}[theorem]{Corollary}
\newtheorem{lemma}[theorem]{Lemma}
\newtheorem{proposition}[theorem]{Proposition}
\newtheorem{definition}[theorem]{Definition}
\newtheorem{remark}[theorem]{Remark}
\numberwithin{equation}{section}
\newcommand{\BE}{\begin{equation}}
\newcommand{\EEQ}{\end{equation}}
\newcommand{\rfb}[1]{\mbox{\rm
    (\ref{#1})}\ifx\undefined\stillediting\else:\fbox{$#1$}\fi}
\newfont{\roma}{cmr10 scaled 1200}
\renewcommand{\cline}{{\mathbb C}}
\newcommand{\nline}  {{\mathbb N}}
\newcommand{\rline}  {{\mathbb R}}
\newcommand{\tline}  {{\mathbb T}}
\newcommand{\zline}  {{\mathbb Z}}
\newcommand{\PPP} {{\mathbf P}}
\newcommand{\dd}  {{\rm d}\hbox{\hskip 0.5pt}}
\renewcommand{\leq} {\leqslant}
\renewcommand{\geq} {\geqslant}
\newcommand{\mm}    {{\hbox{\hskip 0.5pt}}}
\newcommand{\m}     {{\hbox{\hskip 1pt}}}
\newcommand{\nm}    {{\hbox{\hskip -3pt}}}
\newcommand{\bluff} {{\hbox{\raise 15pt \hbox{\mm}}}}
\newcommand{\sbluff}{{\hbox{\raise 10pt \hbox{\mm}}}}
\newcommand{\Om}    {{\Omega}}
\newcommand{\e}      {{\varepsilon}}
\renewcommand{\l}    {{\lambda}}
\renewcommand{\o}    {{\omega}}
\newcommand{\FORALL} {{\hbox{$\hskip 11mm \forall \;$}}}
\newcommand{\rarrow} {\mathop{\rightarrow}}                 
\newcommand{\half}   {{\frac{1}{2}}}
\newcommand{\Leloc}    {{L^2_{\rm loc}[0,\infty)}}
\newcommand{\LE}[1]    {{L^2([0,\infty);#1)}}
\newcommand{\LEloc}[1] {{L^2_{\rm loc}([0,\infty);#1)}}
\font\bosy=cmbsy10
\def\conc{\hbox{\bosy \char '175}}
\newcommand\ICONT[1]{\mathop{\displaystyle \mathop{\conc}_{#1}}}
\newcommand{\Dscr} {\mathcal{D}}
\newcommand{\Hscr} {\mathcal{H}}
\newcommand{\Lscr} {\mathcal{L}}
\newcommand{\Oscr} {\mathcal{O}}
\newcommand{\Uscr} {\mathcal{U}}
\newcommand{\bbm}[1]{\left[\begin{matrix} #1 \end{matrix}\right]}
\newcommand{\sbm}[1]{\left[\begin{smallmatrix} #1
            \end{smallmatrix}\right]}
\begin{document}

\begin{frontmatter}

\title{Stabilizability properties of a linearized\\
       water waves system}


\author[mymainaddress]{Pei Su}
\ead{pei.su@u-bordeaux.fr}

\author[mymainaddress]{Marius Tucsnak\corref{mycorrespondingauthor}}
\cortext[mycorrespondingauthor]{Corresponding author}
\ead{marius.tucsnak@u-bordeaux.fr}
\ead[url]{https://www.math.u-bordeaux.fr/~mtucsnak/}

\author[mysecondaryaddress]{George Weiss}
\ead{                                       gweiss@tauex.tau.ac.il}

\address[mymainaddress]{Institut de Math\'ematiques de Bordeaux,\\
   Universit\'e de Bordeaux, \\
   351, Cours de la Lib\'eration - F 33 405 TALENCE, France}
\address[mysecondaryaddress]{School of Electrical Engineering,
   Tel Aviv University, Ramat Aviv 69978, Israel}

\begin{abstract}
We consider the strong stabilization of small amplitude gravity water
waves in a two dimensional rectangular domain. The control acts on one
lateral boundary, by imposing the horizontal acceleration of the water
along that boundary, as a multiple of a scalar input function $u$,
times a given function $h$ of the height along the active boundary.
The state $z$ of the system consists of two functions: the water level
$\zeta$ along the top boundary, and its time derivative $\dot\zeta$.
We prove that for suitable functions $h$, there exists a bounded
feedback functional $F$ such that the feedback $u=Fz$ renders the
closed-loop system strongly stable. Moreover, for initial states in
the domain of the semigroup generator, the norm of the solution decays
like $(1+t)^{-\frac{1}{6}}$. Our approach uses a detailed analysis of
the partial Dirichlet to Neumann and Neumann to Neumann operators
associated to certain edges of the rectangular domain, as well as
recent abstract non-uniform stabilization results by Chill, Paunonen,
Seifert, Stahn and Tomilov (2019).
\end{abstract}

\begin{keyword}
Linearized water waves equation, collocated actuators and sensors,
Dirichlet to Neumann map, Neumann to Neumann map, operator semigroup,
state feedback, strong stabilization, Hilbert's inequality.
\end{keyword}
\end{frontmatter}

\section{Notation} \label{notation} 

Throughout this paper, the notation
$$\nline,\ \zline,\ \rline,\ \cline$$
stands  for the sets of natural numbers (starting with $1$), integers,
real numbers and complex numbers, respectively. We denote $\zline^*=
\zline\setminus\{0\}$.

If $n,\m k\in\nline$ and $\Oscr\subset\rline^n$ is an open set, then
we use the notation $\Hscr^k(\Oscr)$ for the {\em Sobolev space}
formed by the distributions $f\in\Dscr'(\Oscr)$ having the property
that $\partial^\alpha f\in L^2(\Oscr)$ for every multi-index $\alpha
\in\zline^n$ with $\alpha_j\geq 0$ and $|\alpha|\leq k$. For $f\in
\Hscr^k(\Oscr)$ we set \vspace{-3mm}
\begin{equation} \label{Jeremy_is_out}
   \|f\|_k^2 \m=\m \sum_{|\alpha|\leq k} \|\partial^\alpha f\|^2
   _{L^2} \m.
\end{equation}
Let $\Hscr^0(\Oscr):=L^2(\Oscr)$ and let $\Hscr^s(\Oscr)$,
with $s>0$, denote the fractional order Sobolev spaces obtained by
interpolation via fractional powers of a positive operator (see, for
instance, Lions and Magenes \cite{LiMa}).

The system considered in this work is described by the linearized
equations of water waves in the vertical (in the sense of gravity)
rectangular domain \vspace{-2mm}
\begin{equation} \label{Omega_Mare}
   \Om \m=\m (0,\pi)\times (-1,0).
\end{equation}
We set \vspace{-2mm}
\begin{equation} \label{top_definition}
   \Hscr^1_{top}(\Om) = \{f\in\Hscr^1(\Om)\ \ |\ \ f(x,0)=0,\
   x\in(0,\pi)\},
\end{equation}
where the values at the top boundary are defined in the sense of
the Dirichlet trace, as in \cite{LiMa},
\cite[Sect.~13.6]{Obs_Book}).

For two functions $u$ and $v$ defined on $[0,\infty)$ and for any
$\tau\geq 0$, their \emph{$\tau$-concatenation}, denoted by
$u\ICONT\tau v$, is the function \vspace{-2mm}
$$ u\ICONT\tau v = \begin{cases} u(t)&\quad \hbox{for} \ \
   t\in[0,\tau),\\ v(t-\tau)&\quad \hbox{for}\ \ t\geq\tau .
   \end{cases}$$

If $H$ is a Hilbert space, $\Dscr(A_0)$ is a subspace of $H$ and
$A_0:\Dscr(A_0)\to H$ is a linear operator, then $A_0$ is called {\em
strictly positive} if $A_0$ is self-adjoint and there exists $m_0>0$
such that
$$ \langle A_0 z,z\rangle_H \m\geq\m m_0 \|z\|_H^2 \FORALL
   z\in\Dscr(A_0).$$

\section{The water wave model and its well-posedness}
\label{new_intro}

In this work we study the stabilizability of a system describing
small-amplitude water waves in a rectangular domain, in the presence
of a wave maker. For more details on water waves models we refer to
Whitham's book \cite[Chapter 13]{whitham2011linear} and to Lannes
\cite[Chapter 1]{lannes2013water}. Here we consider the stabilization
of linear water waves by an input (the acceleration of the wave maker)
acting at one of the lateral edges. We assume that the domain $\Om$ is
delimited at its top by a free water surface $\Gamma_s$ and that the
bottom $\Gamma_f$ is flat. The other two components of the boundary of
the fluid domain, denoted by $\Gamma_1$ and $\Gamma_2$, are supposed
to be vertical, see Figure \ref{fig1}. Moreover, we assume that the
fluid fills the rectangular domain $\Om$ defined in Section
\ref{notation}, that it is homogeneous, incompressible, inviscid and
that it undergoes irrotational flows. There is a wave maker that acts
at the left boundary of $\Om$, by injecting (or extracting) fluid in
the horizontal direction, at an acceleration determined by the control
signal $u$. \vspace{-4mm}

\begin{figure}[htbp]
   \centering\includegraphics[width=8cm]{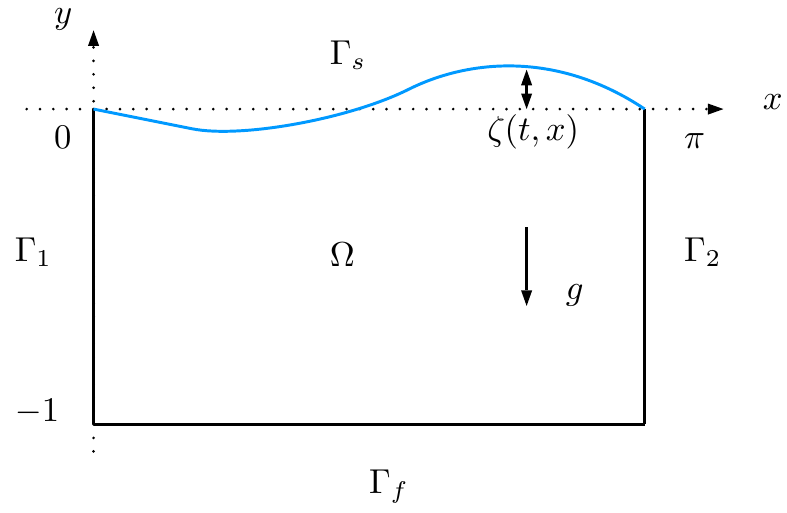}
   \caption{A rectangular domain $\Om$ filled with water} \label{fig1}
\end{figure} \vspace{-2mm}

The equations of the system, for all $t\geq 0$, are:
\begin{equation} \label{gravity-1}
\left\{ \begin{aligned}
   &\Delta\phi(t,x,y) \m=\m 0\quad&\quad (\ (x,y)\in\Om),\\
   &{\phi}(t,x,0)+\zeta(t,x) \m=\m 0\quad&\quad (\ x\in(0,\pi)),\\
   &\frac{\partial\phi}{\partial y}(t,x,0) \m=\m \ddot{\zeta}(t,x)
   \quad&\quad (\ x \in (0,\pi)),\\
   &\frac{\partial\phi}{\partial x}(t,0,y) \m=\m -h(y) u(t)\quad
   &\quad (\ y \in (-1,0)),\\ &\frac{\partial\phi}{\partial y}
   (t,x,-1)=0=\m \frac{\partial\phi}{\partial x}(t,\pi,y)\quad&\quad
   (\ (x,y)\in\Om). \end{aligned} \right.
\end{equation}
In the above equations $\phi$ stands for the derivative with respect
to time of the velocity potential of the fluid and $\zeta$ for the
elevation of the free surface. The function $h$ is given and it
represents the profile of the acceleration field imposed by the
wave maker. Usually we assume that $\int_{-1}^0 h(y)\dd y=0$, to ensure
the conservation of the volume of water. As far as we know, the
controllability and stabilizability properties of systems derived from
\rfb{gravity-1} have been first studied in Russell and Reid
\cite{reid1985boundary} and further in Mottelet
\cite{mottelet2000controllability}.

Before stating our well-posedness result for \rfb{gravity-1}, we need
some background and more notation. First we recall the concept of a
well-posed linear control system, following Weiss \cite{Weiss1}
(where these systems have been called {\em abstract linear control
systems}), see also Tucsnak and Weiss \cite{Obs_Book}.

\begin{definition} \label{WPS}
Let $U$ and $X$ be Hilbert spaces. A {\em well-posed linear control
system} with the state space $X$ and the input space $U$ is a couple
$(\tline,\Phi)$ of families of operators such that
\begin{enumerate}
\item $\tline=(\tline_t)_{t\geq 0}$ is a strongly continuous operator
semigroup (also called a $C_0$-semigroup) on $X$.

\item $\Phi=(\Phi_t)_{t\geq 0}$ is a family of bounded linear
operators from $L^2([0,\infty);U)$ to $X$ (called input maps) such
that for every $u,v\in L^2([0,\infty);U)$,
\begin{equation} \label{funceqPhi0}
   \Phi_{\tau+t} (u\ICONT{\tau}v) = \tline_t\Phi_\tau u + \Phi_t v
   \ \FORALL t,\tau\geq 0,
\end{equation}
where we used the concatenation of functions, see Section
\ref{notation}.
\end{enumerate}
\end{definition}

For any $\tau\geq 0$, let $\PPP_\tau u$ denote the truncation of $u:
[0,\infty)\rarrow U$ to $[0,\tau]$, setting $(\PPP_\tau u)(t)=0$ for 
$t>\tau$. It follows from \rfb{funceqPhi0} that $\Phi_\tau\PPP_\tau=
\Phi_\tau$ (causality), and hence $\Phi_\tau$ has a natural extension
to $\LEloc U$.

Still using the notation from the above definition, if $z_0\in X$ and
$u\in\LEloc U$, then we call the function $z(t)=\tline_t z_0+\Phi_t u$
the {\em state trajectory} of the system corresponding to the initial
state $z_0$ and the input $u$. Let $A:\Dscr(A)\rarrow X$ denote the
generator of $\tline$. For every well-posed linear control system
there exists a (usually unbounded) operator $B$ defined on $U$ and
with values in an extrapolation space that contains $X$, with the
following property: For any $z_0\in X$ and $u\in\LEloc U$, the
corresponding state trajectory is the unique solution (in the
extrapolation space) of the abstract differential equation
\begin{equation} \label{Ax+Bu}
   \dot z(t) \m=\m Az(t)+Bu(t) \m,
\end{equation}
with initial condition $z(0)=z_0$. For details on this see
\cite[Chapter 4]{Obs_Book}. The above operator $B$ is called the {\em
control operator} of the system. This operator is called {\em bounded}
if $B\in\Lscr(U,X)$ (this is the case of interest in this paper).

We would like to formulate the system of equations \rfb{gravity-1} as
a well-posed linear control system. This is not obvious, because the
equations \rfb{gravity-1} do not even resemble \rfb{Ax+Bu}. We have to
define what we mean by the state of our system at some time $t\geq 0$:
this should be
\begin{equation} \label{Salami}
   z(t) \m=\m \bbm{\zeta(t,\cdot)\\ \dot\zeta(t,\cdot)} .
\end{equation}

To define the state space $X$, and also for other arguments, we
introduce a scale of Hilbert spaces as follows. We set
\begin{equation} \label{H}
   H \m=\m \left\{\eta\in L^2[0,\pi]\ \ \left|\ \ \int_0^\pi \eta(x)\,
   \dd x=0\right\}\right.,
\end{equation}
which is a Hilbert space when endowed with the inner product inherited
from $L^2[0,\pi]$. It is known that the family $(\varphi_k)_{k\in
\nline}$ defined by
\begin{equation} \label{mare_definitie}
   \varphi_k(x) \m=\m \sqrt{\frac{2}{\pi}}\cos(kx)\ \FORALL
   x\in [0,\pi],
\end{equation}
forms an orthonormal basis in $H$. For any $\eta\in H$, we denote 
$\eta_k=\langle\eta,\varphi_k\rangle$. The scale of Hilbert spaces 
$(H_\alpha)_{\alpha\geq 0}$ are defined by $H_0=H$ and
\begin{equation} \label{HALPHA}
   H_\alpha \m=\m \left\{\eta\in H\ \ \left|\ \ \sum_{k\geq 1}
   k^{2\alpha} |\eta_k|^2 < \infty\right\}\right. \qquad 
   (\alpha\geq 0),
\end{equation}
with the inner products $\left(\langle\cdot,\cdot\rangle_\alpha
\right)_{\alpha\geq 0}$ defined by \m $\langle\eta,\psi\rangle_\alpha=
\sum_{k\geq 1}k^{2\alpha}\eta_k\m\overline{\psi_k}$, for all $\eta,\m
\psi\in H_\alpha$. It is not difficult to check that
$$ H_1 \m=\m \left\{\eta\in\Hscr^1(0,\pi)\ \ \left|\ \ \int_0^\pi
   \eta(x)\, \dd x = 0\right\} \right. .$$
By interpolation theory (see, for instance, Lions and Magenes
\cite{LiMa}, Bensoussan et al \cite[Part II]{BDDM} and Chandler-Wilde
et al \cite{chandler2015interpolation}) it follows that
\begin{equation} \label{interpolation}
   H_s = \left\{\eta\in\Hscr^s(0,\pi)\ \ \left|\ \ \int_0^\pi
   \eta(x)\, \dd x=0 \right\}\right. \ \FORALL s\in (0,1).
\end{equation}

We define what we mean by a solution of the water wave equations
\rfb{gravity-1}.

\begin{definition} \label{def_solutii}
Given $u\in\Leloc$ and $h\in L^2[-1,0]$, with $\int_{-1}^0 h(y)\,
\dd y=0$, a couple $(\phi,\zeta)$ is called a {\em solution} of
\rfb{gravity-1} if \vspace{-1mm}
$$ \phi\in\LEloc{\Hscr^1(\Om)},\quad \zeta\in C([0,\infty);H_\half)
   \cap C^1([0,\infty);H),\vspace{-2mm}$$
\vspace{-3mm}
\begin{equation} \label{Jeremy_C}
   \phi(t,\cdot,0)+\zeta(t,\cdot) \m=\m 0 \m, {\rm\ equality\ in\ }
   \LEloc{L^2[0,\pi]},
\end{equation}
and for every $\Psi\in\Hscr^1(\Om)$ and every $t>0$ we have
\vspace{-1mm}
\begin{equation} \label{weaksol}
   \int_0^\pi \dot\zeta(t,x)\overline{\Psi(x,0)}\m\dd x -\int_0^\pi
   \dot\zeta(0,x) \overline{\Psi(x,0)}\m\dd x \m=\m \qquad\qquad\m  
\end{equation} \vspace{-2mm}
$$ \int_0^t\int_\Om \nabla\phi(\sigma,x,y) \cdot \overline{\nabla
   \Psi(x,y)}\m\dd x\,\dd y\, \dd\sigma - \int_0^t u(\sigma)
   \int_{-1}^0 h(y)\overline{\Psi(0,y)} \m\dd y\, \dd \sigma.$$
\end{definition}

\begin{remark} \label{baccalaureat} {\rm
We explain the connection between the water waves equations
\rfb{gravity-1} and their variational formulation \rfb{Jeremy_C}-
\rfb{weaksol}. In one direction, assume that $(\phi,\zeta)$ is a 
classical solution of \rfb{gravity-1}, having the smoothness
\begin{equation} \label{Boris_J}
   \phi\in C([0,\infty);\Hscr^2(\Om)) \m,\qquad
   \zeta\in C(([0,\infty);H_\half) \cap C^2([0,\infty);H) \m.
\end{equation}
(If $u\not\equiv0$, this implies that $h\in\Hscr^\half(-1,0)$ and $u$
is continuous.) The equation \rfb{Jeremy_C} is simply copied from
\rfb{gravity-1}. We multiply the first equation in \rfb{gravity-1}
with $\overline\Psi$ and apply the first Green formula (integration by
parts), taking into account the last three lines of \rfb{gravity-1}.
After this we do simple integration with respect to $t$, and we obtain
\rfb{weaksol}.

In the opposite direction, let us assume that $(\phi,\zeta)$ is a
solution of \rfb{Jeremy_C}-\rfb{weaksol} with the additional
regularity \rfb{Boris_J}, and $u$ is continuous. Then \rfb{weaksol}
can be differentiated with respect to the time $t$, and after using
the first Green formula we obtain
$$ \int_0^\pi \ddot\zeta(t,x)\overline{\Psi(x,0)}\m\dd x \m=\m
   \int_{\partial\Om} \left( \frac{\partial}{\partial\nu} \phi\right)
   \overline\Psi \dd\sigma - \int_\Om \Delta\phi(t,x,y) \cdot
   \overline{\Psi(x,y)} \m\dd x\,\dd y$$
$$ \m\qquad\qquad - u(t)\int_{-1}^0 h(y)\overline{\Psi(0,y)} \m\dd y
   \FORALL t\geq 0 \m,$$
where $\frac{\partial}{\partial\nu}$ denotes the Neumann trace on the
entire boundary $\partial\Om$. Considering only functions $\Psi$ with
compact support in $\Om$, we see from the above that we must have
$\Delta\phi=0$. After this, we consider test functions $\Psi$ whose
trace is supported on one of the four segments of $\partial\Om$,
knowing that these traces are dense in the $L^2$ space of the
relevant segment, see \cite[Theorem 13.6.10]{Obs_Book}. From here
we can get that $\phi$ and $\zeta$ satisfy also the last three
equations in \rfb{gravity-1}. (It also follows that $h\in\Hscr^\half
(-1,0)$.) }
\end{remark}

The following result establishes the existence of a well-posed linear
control system corresponding to \rfb{gravity-1}. For the proof we
refer to Section \ref{sem_set}.

\begin{theorem} \label{give_context}
Let $h\in L^2[-1,0]$ be such that $\int_{-1}^0 h(y)\, \dd y=0$. Then
for every $u\in\Leloc$, $\zeta_0\in H_\half$ and $w_0\in H$, there
exists a unique solution of \rfb{gravity-1} with $\zeta(0)=\zeta_0$
and $\dot\zeta(0)=w_0$. Moreover, there exists a well-posed linear
control system $(\tline,\Phi)$ with state space $X=H_\half\times H$
and input space $U=\cline$ such that, setting $z_0=\sbm{\zeta_0\\
w_0}$ and using the state from \rfb{Salami}, we have \vspace{-1mm}
\begin{equation} \label{pancakes}
   z(\tau) \m=\m \tline_\tau z_0 + \Phi_\tau u \FORALL \tau\geq 0.
\end{equation}
Finally, the generator $A$ of \m $\tline$ is skew-adjoint, with domain
$\Dscr(A)=H_1\times H_\half$, and there exists $B\in\Lscr(\cline,X)$
such that for any $\tau\geq 0$, \vspace{-1mm}
\begin{equation} \label{Erdogan}
   \Phi_\tau u \m=\m \int_0^\tau \tline_{\tau-\sigma} Bu(\sigma)\,\dd
   \sigma \FORALL u\in\Leloc.
\end{equation}
\end{theorem}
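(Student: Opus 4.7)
The plan is to recast the water wave system \rfb{gravity-1} as an abstract second-order conservative equation $\ddot\zeta + A_0\zeta = bu$ driven by a bounded scalar control, after which the conclusions follow from standard semigroup theory. First I would introduce the Dirichlet-to-Neumann operator $A_0$ associated with the top edge of $\Om$: for $\eta\in H_1$, let $\phi_\eta\in\Hscr^1(\Om)$ be the unique harmonic function with $\phi_\eta|_{y=0}=\eta$ and vanishing Neumann traces on the other three edges, and set $A_0\eta=\partial_y\phi_\eta|_{y=0}$. Separation of variables in the basis $(\varphi_k)$ gives at once $A_0\varphi_k = k\tanh(k)\m\varphi_k$. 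Since $k\tanh(k)$ is comparable to $k$ for $k\geq 1$, $A_0$ is strictly positive selfadjoint with $\Dscr(A_0)=H_1$ and $\Dscr(A_0^{1/2})=H_\half$. Next I would define $\phi_\ast\in\Hscr^1_{top}(\Om)$ as the unique Lax--Milgram solution of the mixed Neumann problem
\begin{equation*}
 \int_\Om \nabla\phi_\ast\cdot\overline{\nabla\Psi}\,\dd x\,\dd y \m=\m -\int_{-1}^0 h(y)\overline{\Psi(0,y)}\,\dd y \FORALL \Psi\in\Hscr^1_{top}(\Om),
\end{equation*}
and set $b=\partial_y\phi_\ast|_{y=0}$. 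The nontrivial claim is that $b\in H$, to which I return below.

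Granting $b\in H$, superposition yields $\phi(t,\cdot,\cdot) = -\phi_{\zeta(t,\cdot)}+u(t)\phi_\ast$, and the third line of \rfb{gravity-1} becomes equivalent to $\ddot\zeta+A_0\zeta = bu$, with $\zeta(0)=\zeta_0$ and $\dot\zeta(0)=w_0$. This fits the standard framework of conservative second-order systems: the operator $A=\sbm{0 & I\\ -A_0 & 0}$ with domain $H_1\times H_\half$ is skew-adjoint on $X=H_\half\times H$ (see, e.g., \cite[Proposition~3.7.6]{Obs_Book}) and therefore generates a unitary $C_0$-group $\tline$ on $X$. Because $b\in H$, the control operator $B=\sbm{0\\ b}$ lies in $\Lscr(\cline,X)$, and the variation-of-constants formula \rfb{Erdogan} defines $\Phi_\tau$ so that $(\tline,\Phi)$ is automatically a well-posed linear control system in the sense of Definition~\ref{WPS}.

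Matching the abstract state trajectory with Definition~\ref{def_solutii} is then essentially a bookkeeping argument. For $z_0\in\Dscr(A)$ and smooth $u$, the reconstructed potential $\phi(t)=-\phi_{\zeta(t)}+u(t)\phi_\ast$ has the required regularity, and testing against $\Psi\in\Hscr^1(\Om)$ via Green's first identity, using the defining boundary conditions of $\phi_{\zeta(t)}$ and of $\phi_\ast$, produces precisely \rfb{weaksol}. Density of $\Dscr(A)$ in $X$ and of smooth functions in $\Leloc$, together with the continuity of $\Phi_\tau$, extends the identity to arbitrary data. Uniqueness follows because any solution of \rfb{Jeremy_C}--\rfb{weaksol} with $z_0=0$ and $u=0$ satisfies, via appropriate choices of $\Psi$, the homogeneous abstract equation with zero initial state, and hence vanishes by the unitarity of $\tline$.

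The main obstacle I anticipate is the verification $b\in H$. A priori the Neumann trace of $\phi_\ast$ on the top edge, given only $L^2$ Neumann data on the perpendicular left edge, lies merely in $\Hscr^{-1/2}$, and the gain to $L^2$ is not generic but depends on the rectangular geometry. Separation of variables in the mixed-boundary eigenbasis $\{\sqrt 2\sin(\alpha_n y)\}_{n\geq 0}$ of $-\partial_y^2$ on $(-1,0)$, with $\alpha_n=(n+\half)\pi$, yields the explicit formula
\begin{equation*}
 b(x) \m=\m \sum_{n\geq 0} \sqrt 2\,h_n\,\frac{\cosh(\alpha_n(\pi-x))}{\sinh(\alpha_n\pi)},
\end{equation*}
where $h_n$ are the basis coefficients of $h$. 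Squaring, integrating, and simplifying via $\sinh(A\pm B)/[\sinh A\sinh B]=\coth A\pm\coth B$ reduces $\|b\|_H^2$ to a double series whose dominant part is $(2/\pi)\sum_{n,m\geq 0} h_n\overline{h_m}/(n+m+1)$; the classical Hilbert inequality (highlighted in the keywords) then delivers the bound $\|b\|_H\lesssim \|h\|_{L^2}$. The zero-mean condition $\int_0^\pi b\,\dd x=0$ drops out of the same expansion using $\cos(\alpha_n)=0$, precisely because of the hypothesis $\int_{-1}^0 h\,\dd y=0$.
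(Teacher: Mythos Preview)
Your proposal is correct and follows essentially the same route as the paper: the operator $A_0$ is the partial Dirichlet-to-Neumann map from Proposition~\ref{prop_def_DN}, your $\phi_\ast$ is the paper's $Nh$, your $b$ is the paper's $B_1h=B_0 1$, and the crucial estimate $b\in H$ is obtained in Theorem~\ref{NN-map} by exactly the Hilbert-inequality argument you outline (the paper splits $\cosh$ into two exponentials rather than using the $\sinh(A\pm B)$ identity, but the resulting bilinear form $\sum h_n\overline{h_m}/(n+m+1)$ is the same). The only presentational difference is that the paper verifies the equivalence between the abstract trajectory and Definition~\ref{def_solutii} directly at the weak level for all data (via \rfb{variationala_mare_bis} and \rfb{adj_con_bis}), rather than your smooth-data-plus-density argument, and obtains $\int_0^\pi b\,\dd x=0$ by testing \rfb{idioata} with $\Psi\equiv 1$ instead of your series computation.
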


We mention that, according to the above theorem and what we have
said around \rfb{Ax+Bu}, the state strajectories of our system
are solutions of \rfb{Ax+Bu}, in the sense of \cite[Sect.~4.1-4.2]
{Obs_Book}, and our control operator $B$ is bounded.

\section{Statement of the main result} \label{main} 

We recall some commonly used stabilizability concepts, for the
particular situation of bounded control and feedback operators.

\begin{definition} \label{def_stab}
Let $\Sigma=(\tline,\Phi)$ be a well-posed linear control system with
state space $X$ and input space $U$. Let $A$ be the generator of
$\tline$ and assume that there exists $B\in\Lscr(U,X)$ such that
\rfb{Erdogan} holds. For some feedback operator $F\in\Lscr(X,U)$ we 
denote by $\tline^{cl}$ the (closed loop) operator semigroup on $X$
generated by $A+BF$. Then the system $(\tline,\Phi)$ is:
\begin{enumerate} \item {\em Exponentially stabilizable} with bounded 
   feedback, if there exists $F\in\Lscr(X,U)$ such that the semigroup 
   $\tline^{cl}$ is exponentially stable;
   \item {\em Strongly stabilizable} with bounded feedback, if there 
   exists $F\in\Lscr(X,U)$ such that the semigroup $\tline^{cl}$ is 
   strongly stable;
   \item {\em Uniformly stabilizable for smooth data} (USSD), if there
   exists $F\in\Lscr(X,U)$ and $f:[0,\infty)\to [0,\infty)$, with 
   $\displaystyle\lim_{t\to\infty} f(t)=0$, such that
\begin{equation} \label{USSD}
   \|\tline^{cl}_t z_0\|_X\leq f(t)\|z_0\|_{\Dscr(A)} \FORALL z_0\in
   \Dscr(A),\ t\geq 0.
\end{equation}
\end{enumerate}
If $f$ in \rfb{USSD} can be chosen such that $\displaystyle\lim_{t
\to\infty}t^m f(t)=0$ for some $m\in\nline$, then the USSD property is
called {\em polynomial stabilizability}.
\end{definition}

In \rfb{USSD} and also later, $\|\cdot\|_{\Dscr(A)}$ denotes the
graph norm on $\Dscr(A)$.

\begin{remark} \label{digression} {\rm
Note that the property \rfb{USSD} does not imply that the semigroup
$\tline^{cl}$ is strongly stable. Indeed, consider $\tline^{cl}_t$ to
be ${\rm e}^{-0.7t}$ times the semigroup from \cite[Example 2.3]
{Weiss4} (based on Zabczyk \cite{Zabcz}), with $\l_n=2^n$, then it
satisfies \rfb{USSD} with $f(t)=M{\rm e}^{-0.2t}$ (for some $M>0$) but
$\tline^{cl}$ is exponentially growing: $\|\tline^{cl}_t\|={\rm
e}^{0.3t}$. However, if $\tline^{cl}$ is a bounded semigroup and
\rfb{USSD} holds, then it is easy to see that $\tline^{cl}$ is
strongly stable. }
\end{remark}

Here is our main result:

\begin{theorem} \label{th_mergebine}
Let $\Sigma=(\tline,\Phi)$ be the well-posed linear control system
introduced in Theorem {\rm\ref{give_context}}. Then
\begin{enumerate}
\item $\Sigma$ is not exponentially stabilizable with bounded 
    feedback;
\item $\Sigma$ is strongly stabilizable with bounded feedback if and
    only if $h$ is a {\em strategic profile,} in the sense that 
    \vspace{-2mm}
    \begin{equation} \label{B*phi_start}
       \int_{-1}^0 h(y) \cosh{[k(y+1)]}\, \dd y \neq 0 \FORALL
       k\in\nline;
    \end{equation}
    In this case, one strongly stabilizing feedback operator is
    $F=-B^*$.
\item If \vspace{-2mm}
    \begin{equation} \label{gasitabine}
       \inf_{k\in\nline} \m \frac{k}{\cosh k} \left| \int_{-1}^0
       h(y) \cosh{[k(y+1)]} \,\dd y\right| \m>\m 0,
    \end{equation}
    then  the system $\Sigma$ is USSD. More precisely, the feedback
    operator $F=-B^*$ leads to the closed-loop semigroup
    $\tline^{cl}$ (with generator $A-BB^*$) which is strongly
    stable and has the following property: there exists $M>0$ such
    that
    \begin{equation} \label{final_estimate}
       \|\tline_t^{cl} z_0\|_X \m\leq\m \frac{M}{(1+t)^{\frac16}}
       \|z_0\|_{\Dscr(A)} \FORALL z_0\in\Dscr(A),\ t\geq 0.
    \end{equation}
\end{enumerate}
\end{theorem}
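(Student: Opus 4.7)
The three parts can be addressed independently. For Part 1, since $U = \cline$ the operator $B$ has rank one, hence $BF$ is compact for every $F \in \Lscr(X,U)$. The semigroup $\tline$ generated by the skew-adjoint $A$ is unitary, so on the infinite-dimensional Hilbert space $X$ one has $\|\tline_t\|_{\mathrm{ess}} = 1$ for every $t > 0$, giving essential growth bound zero. Since the essential growth bound is invariant under compact perturbations, the same holds for $\tline^{cl}$, so $\omega_0(\tline^{cl}) \geq 0$ and exponential stability is impossible.

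For Part 2, I would treat sufficiency and necessity of (\ref{B*phi_start}) separately. With $F = -B^*$, the closed-loop generator $A - BB^*$ is dissipative ($\Re\langle (A - BB^*)z, z\rangle_X = -\|B^*z\|^2$), and its resolvent is compact (bounded perturbation of $A$, whose resolvent is compact because $\Dscr(A) = H_1 \times H_\half$ embeds compactly into $X$). By the Arendt-Batty-Lyubich-Vu theorem, strong stability of the resulting contraction semigroup follows once one shows $\sigma_p((A - BB^*)^*) \cap i\rline = \emptyset$. If $(-A - BB^*)z = i\omega z$, the real part of the inner product with $z$ forces $B^*z = 0$, and then $Az = -i\omega z$; computing the eigenvectors of $A$ explicitly (they take the form $\sbm{\varphi_k\\ \pm i\mu_k \varphi_k}$ with $\mu_k^2 = k\tanh k$) and applying Green's identity to the harmonic extension shows that $B^*$ acts on these eigenvectors proportionally to $\frac{1}{\cosh k}\int_{-1}^0 h(y)\cosh[k(y+1)]\,\dd y$, so (\ref{B*phi_start}) yields $B^*z \neq 0$, a contradiction. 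For necessity, if (\ref{B*phi_start}) fails at some $k_0$ so that $B^*\phi_{k_0} = 0$ for the unit eigenvector $\phi_{k_0}$ with $A\phi_{k_0} = i\mu_{k_0}\phi_{k_0}$, then for every $F \in \Lscr(X,\cline)$ one has $(A + BF)^*\phi_{k_0} = A^*\phi_{k_0} = -i\mu_{k_0}\phi_{k_0}$, so
\begin{equation*}
   \langle \tline^{cl}_t\phi_{k_0}, \phi_{k_0}\rangle_X \m=\m \langle \phi_{k_0}, (\tline^{cl}_t)^*\phi_{k_0}\rangle_X \m=\m e^{i\mu_{k_0} t},
\end{equation*}
which has modulus one for all $t$; hence $\|\tline^{cl}_t\phi_{k_0}\|_X \geq 1$ and strong stability fails.

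For Part 3, I would combine the strong stability from Part 2 with a quantified resolvent estimate on the imaginary axis and appeal to the polynomial decay theorem of Chill, Paunonen, Seifert, Stahn and Tomilov. The plan is to prove a bound of the form
\begin{equation*}
   \|R(is, A - BB^*)\|_{\Lscr(X)} \m\leq\m M(1 + |s|^6) \FORALL s \in \rline,
\end{equation*}
via the Sherman-Morrison (rank-one perturbation) identity
\begin{equation*}
   R(is, A - BB^*) \m=\m R(is, A) \m-\m \frac{R(is, A) B \cdot B^* R(is, A)}{1 + B^* R(is, A) B}.
\end{equation*}
The skew-adjointness of $A$ makes $R(is, A)$ skew on $i\rline$, so $g(s) := B^* R(is, A) B$ is purely imaginary and $|1 + g(s)| \geq 1$; near each eigenvalue $i\mu_k$ of $A$, $1 + g$ develops a large imaginary part that cancels the corresponding pole of $R(is, A)$ appearing in the second term. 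The detailed bookkeeping requires expanding in the orthonormal basis of eigenvectors of $A$, using the lower bound $|B^*\phi_k^\pm| \gtrsim 1/k$ that follows from (\ref{gasitabine}), and controlling the off-diagonal interference via Hilbert's inequality. This last step is the principal technical obstacle, and it is where the exponent $6$ emerges from the interplay of the slowly closing spectral gap ($\mu_{k+1} - \mu_k \sim 1/\sqrt{k}$), the asymptotic $\mu_k \sim \sqrt{k}$, and the weak observability of order $1/k$. Once this resolvent bound is established, the quantified stability theorem yields (\ref{final_estimate}).
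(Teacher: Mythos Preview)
Your treatment of Parts 1 and 2 is correct and matches the paper in spirit: for Part~1 the paper cites \cite[Theorem 5.2.6]{CZ_THE_BOOK} (alternatively Gibson or Guo--Guo--Zhang), which is precisely the essential-spectrum/compact-perturbation argument you give; for Part~2 it invokes Benchimol's theorem via approximate observability rather than your ABLV argument, but the content is the same, and the necessity proof is identical to yours.

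For Part 3 the paper takes a substantially shorter and different route. Rather than estimating $\|R(is,A-BB^*)\|$ by hand, it applies the \emph{wave-packet observability criterion} of Chill--Paunonen--Seifert--Stahn--Tomilov (Theorem~1.1 of \cite{chill2019non}) directly. Only two elementary ingredients are needed: (a) a spectral-gap estimate (the last assertion of Lemma~\ref{lema_spectrala}) showing that each interval $[\omega-\varepsilon/|\omega|,\,\omega+\varepsilon/|\omega|]$ contains at most one $\mu_k$, so that every wave packet $WP_{s,\delta(s)}(A)$ with $\delta(s)=\varepsilon/(|s|+1)$ is at most one-dimensional; and (b) the lower bound $|B^*\phi_k|\geq M_0/|k|$ from \eqref{gasitabine} and \eqref{B*phi}, which via $\mu_k\sim\sqrt{k}$ becomes $|B^*\phi|\geq M_1(|s|+1)^{-2}\|\phi\|$ on wave packets. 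Feeding $\delta(s)\sim|s|^{-1}$ and $\gamma(s)\sim|s|^{-2}$ into the black-box theorem yields \eqref{final_estimate} immediately; all the resolvent bookkeeping you allude to is absorbed into the cited result.

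Your Sherman--Morrison outline is plausible in principle, but the step you yourself flag as ``the principal technical obstacle'' is not carried out, and the specific tool you name --- Hilbert's inequality for ``off-diagonal interference'' --- has no evident role in this computation (the paper uses Hilbert's inequality only in Theorem~\ref{NN-map}, to prove boundedness of the Neumann-to-Neumann map $B_1$, not in the stability analysis). If you completed your route you would essentially be reproving, for this particular diagonal-plus-rank-one pair $(A,B)$, the content of the Chill et al.\ criterion; the paper's approach buys brevity by recognizing that the system fits that abstract framework.
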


\begin{remark} {\rm
It is not difficult to check (by integration by parts) that condition
\rfb{gasitabine} is satisfied, for instance, if there exists
$\e\in(0,1)$ such that
\begin{equation} \label{SC}
   \lVert h'\rVert_{L^\infty[-1,0]} \m<\m \frac{(1-\e)
   \tanh{1}}{1-\frac{2}{{\rm e}}}|h(0)|,
\end{equation}
where ${\rm e}=2.71828...$ is the basis of the natural
logarithm. Indeed, there are many functions satisfying \rfb{SC} and
$\int_{-1}^0 h(y)\dd y=0$, such as the linear function $h_1(y)=y+
\half$, the trigonometric function $h_2(y)=\cos{\left[\half\pi(y+
\frac{3}{2})\right]}$ and some slightly modified step functions.
Compared with other strategic conditions, for instance the constraint
condition at rational points in \cite{ammari2000stabilization}, the
condition \rfb{SC} is easier to satisfy in practice.}
\end{remark}

\begin{remark} {\rm
The first two conclusions in Theorem \ref{th_mergebine} appear
partially in \cite{mottelet2000controllability}, with some steps of
the proof not given. (For instance the operators $A_0$ and $B_0$ that
we introduce in Section \ref{sec5} are used without a detailed
construction and proof of their main properties.) As far as we know,
the property of the water waves system described in the third point of
Theorem \ref{th_mergebine} is new, and gives us more detailed 
information on the stability of the closed-loop system.}
\vspace{-8mm}
\end{remark}

\section{Some background on the partial Dirichlet and Neumann maps
         in a rectangular domain } \label{sec4} 

\vspace{-2mm}
In this section we consider two boundary value problems for the
Laplacian in the rectangular domain $\Om=(0,\pi)\times(-1,0)$ and we
define the corresponding solution operators. Note that, $\Om$ being a
rectangle, we are able to construct these solution operators, as well
as the Dirichlet to Neumann and Neumann to Neumann operators (in the
next section) in an elementary and explicit way, using the separation
of variables and analysis of Fourier or Dirichlet series.  Another
possible approach to these issues, pursued in
\cite{mottelet2000controllability}, is the use of the much more
sophisticated theory of elliptic problems in polygonal domains as
described, for instance, in Grisvard \cite{Grisvard}.

We begin by introducing a self-adjoint operator on $L^2(\Om)$ which
plays an important role in our arguments in this section.

\begin{proposition} \label{AZERO0}
With $\Om$ as in \rfb{Omega_Mare}, we consider the operator
$A_1:\Dscr(A_1)\to L^2(\Om)$ defined by \vspace{-3mm}
$$ \Dscr(A_1) = \left\{f \in \Hscr^2(\Om)\ \ \left|\ \ \begin{matrix}
   f(x,0)=0,\ \frac{\partial f}{\partial y}(x,-1)=0& x\in (0,\pi)\\
   \frac{\partial f}{\partial x}(0,y)=0,\ \frac{\partial f}
   {\partial x}(\pi,y)=0 & y\in (-1,0) \end{matrix}\right.\right\},$$
$$ A_1 f = -\Delta f \FORALL f\in\Dscr(A_1).$$
Then $A_1$ is a strictly positive operator on $L^2(\Om)$.
\end{proposition}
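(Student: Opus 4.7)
My plan is to diagonalize $A_1$ explicitly by separation of variables in the rectangular domain $\Om$; this will yield self-adjointness, strict positivity, and an explicit lower bound $m_0$ simultaneously.

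First, I would solve the eigenvalue problem $-\Delta f = \lambda f$ subject to the boundary conditions listed in $\Dscr(A_1)$. Writing $f(x,y)=X(x)Y(y)$, the $X$-problem is $-X''=\alpha X$ on $(0,\pi)$ with Neumann endpoints, giving $X_k(x)=\cos(kx)$ for $k\in\{0,1,2,\ldots\}$; the $Y$-problem is $-Y''=\beta Y$ on $(-1,0)$ with $Y(0)=0$ and $Y'(-1)=0$, giving $Y_n(y)=\sin((n+\half)\pi y)$ for $n\in\{0,1,2,\ldots\}$. The normalized products $\Phi_{k,n}=c_{k,n}X_kY_n$ form an orthonormal basis of $L^2(\Om)$ (by Fubini together with the Sturm--Liouville completeness of each factor), each lies in $\Dscr(A_1)$, and the eigenvalues $\lambda_{k,n}=k^2+((n+\half)\pi)^2$ all satisfy $\lambda_{k,n}\geq \pi^2/4$.

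Second, I would introduce the spectral operator $\widetilde A_1 f = \sum_{k,n}\lambda_{k,n}\langle f,\Phi_{k,n}\rangle \Phi_{k,n}$, defined on its natural $\ell^2$-weighted domain. By construction $\widetilde A_1$ is self-adjoint on $L^2(\Om)$ and $\langle\widetilde A_1 f,f\rangle\geq (\pi^2/4)\|f\|^2$, so it is strictly positive with $m_0=\pi^2/4$. It remains to show that $\widetilde A_1 = A_1$. The inclusion $\Dscr(A_1)\subseteq\Dscr(\widetilde A_1)$, with $\widetilde A_1 f = -\Delta f$ on that domain, follows from two applications of Green's formula: since $f$ and each $\Phi_{k,n}$ both lie in $\Dscr(A_1)$ and hence share the Dirichlet condition on $\{y=0\}$ and Neumann conditions on the other three edges, every boundary integral vanishes and $\langle -\Delta f, \Phi_{k,n}\rangle_{L^2(\Om)} = \lambda_{k,n}\langle f,\Phi_{k,n}\rangle_{L^2(\Om)}$.

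For the reverse inclusion $\Dscr(\widetilde A_1)\subseteq\Dscr(A_1)$, I would exploit that the trigonometric structure makes the three families $\{\partial_x^2\Phi_{k,n}\}$, $\{\partial_y^2\Phi_{k,n}\}$, and $\{\partial_x\partial_y\Phi_{k,n}\}$ each mutually orthogonal in $L^2(\Om)$. A Parseval computation then yields
$$\|\partial_x^2 f\|_{L^2}^2 + 2\|\partial_x\partial_y f\|_{L^2}^2 + \|\partial_y^2 f\|_{L^2}^2 \m=\m \sum_{k,n}\lambda_{k,n}^2|\langle f,\Phi_{k,n}\rangle|^2 \m=\m \|\widetilde A_1 f\|_{L^2}^2,$$
so every $f\in\Dscr(\widetilde A_1)$ automatically lies in $\Hscr^2(\Om)$, and its four boundary conditions pass to the limit via continuity of the Dirichlet and Neumann traces on $\Hscr^2$.

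The step I anticipate as the main obstacle is precisely this last identification of the spectrally defined domain with the stated $\Hscr^2$-based domain, because $\Om$ has corners where Dirichlet and Neumann conditions meet, and the generic elliptic $\Hscr^2$-regularity-up-to-the-boundary theorem is not guaranteed at such mixed corners. The Parseval identity above bypasses this difficulty entirely by computing the full Hessian norm directly on the spectral side, where orthogonality of the trigonometric second derivatives eliminates any corner pathology.
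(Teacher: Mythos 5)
Your proof is correct, and it shares the paper's starting point---the explicit separation-of-variables eigenbasis $\cos(kx)\sin\bigl[(2l-1)\tfrac{\pi}{2}y\bigr]$---but the mechanism for self-adjointness is genuinely different. The paper shows that $A_1$ is symmetric and surjective (exhibiting the preimage of $g=\sum c_{kl}\Psi_{kl}$ as $\sum c_{kl}\lambda_{kl}^{-1}\Psi_{kl}$) and invokes the fact that a densely defined symmetric surjective operator is self-adjoint; strict positivity then comes from Green's formula $\langle A_1f,f\rangle=\|\nabla f\|_{L^2(\Om)}^2$ together with a Poincar\'e inequality. You instead construct the diagonal spectral operator and prove the two domain inclusions, which forces you to confront head-on the point the paper leaves implicit: that the candidate preimage (equivalently, any $f$ with $\sum\lambda_{kl}^2|\langle f,\Phi_{kl}\rangle|^2<\infty$) actually lies in $\Hscr^2(\Om)$ and satisfies the four boundary conditions---a nontrivial claim near the corners where Dirichlet and Neumann data meet, where generic elliptic $\Hscr^2$-regularity is not automatic. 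Your Parseval identity for the full Hessian, exploiting the mutual orthogonality of the three families of second derivatives of the eigenfunctions, is exactly that missing verification, and it additionally yields the explicit constant $m_0=\pi^2/4$ without Poincar\'e. Two small remarks: you correctly include the $x$-constant modes $k=0$, which are needed for completeness of the basis in $L^2(\Om)$; and for $\Hscr^2$ membership you should also record that $f$ and its first-order derivatives are controlled, e.g.\ via $k^2\leq\lambda_{k,n}\leq\tfrac{4}{\pi^2}\lambda_{k,n}^2$, so that the partial sums converge in $\Hscr^2(\Om)$ and not merely Hessian-wise---a one-line addition.
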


\begin{proof}
The operator $A_1$ is obviously symmetric. Moreover, the family
\vspace{-2mm}
$$ \Psi_{kl}(x,y) = \frac{2}{\sqrt{\pi}}\cos(kx)\sin\left[(2l-1)
   \frac{\pi}{2}y\right] \FORALL k,l\in\nline,\ (x,y)\in\Om,$$
is an orthonormal basis for $L^2(\Om)$ formed of eigenvectors of
$A_1$, corresponding to the eigenvalues \vspace{-2mm}
$$\m\ \l_{kl}=k^2+(2l-1)^2\frac{\pi^2}{4} \FORALL k,l\in\nline.$$
Let $g\in L^2(\Om)$, so that \m $g=\sum_{k,l\in\nline} c_{kl}
\Psi_{kl}$, with $c_{kl}\in l^2(\nline^2)$. This implies that $f$
defined by
$$ f=\sum_{k,l\in\nline} \frac{c_{kl}}{k^2+(2l-1)^2\frac{\pi^2}{4}}
   \Psi_{kl},$$
satisfies $f\in\Dscr(A_1)$ and $A_1f=g$. Thus the operator $A_1$ is
onto so that (see, for instance, \cite[Proposition 3.2.4]{Obs_Book})
$A_1$ is self-adjoint. Finally, it follows from the first Green
formula that
$$ \langle A_1 f,f\rangle_{L^2(\Om)} \m=\m \|\nabla f\|_{L^2(\Om)}^2
   \FORALL f\in\Dscr(A_1) \m.$$
This, together with a version of the Poincar\'e inequality (see
\cite[Theorem 13.6.9]{Obs_Book}), implies that $A_1$ is strictly
positive.
\end{proof}

\begin{proposition} \label{def-D}
For every $\eta\in L^2[0,\pi]$ there exists a unique function
$D\eta\in L^2(\Om)$ such that \vspace{-1mm}
\begin{equation} \label{taking}
   \int_\Om (D\eta)(x,y) \overline{g(x,y)}\m \dd x \m \dd y \m=
   -\int_0^\pi \eta(x) \overline{\frac{\partial(A_1^{-1}g)}{\partial
   y}(x,0)}\, \dd x \ \ \forall\m g\in L^2(\Om).
\end{equation}
Moreover, the operator $\eta\mapsto D\eta$ (called the {\em
partial Dirichlet map}) is bounded from $L^2[0,\pi]$ into $L^2(\Om)$.
\end{proposition}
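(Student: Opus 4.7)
The plan is to apply the Riesz representation theorem in $L^2(\Om)$. For fixed $\eta\in L^2[0,\pi]$, I view the right-hand side of (\ref{taking}) as the conjugate-linear form
\[
\Lambda_\eta(g) \m=\m -\int_0^\pi \eta(x)\,\overline{\frac{\partial(A_1^{-1}g)}{\partial y}(x,0)}\,\dd x \m,\qquad g\in L^2(\Om).
\]
If I can prove $|\Lambda_\eta(g)|\leq C\|\eta\|_{L^2[0,\pi]}\|g\|_{L^2(\Om)}$, then Riesz will produce a unique $D\eta\in L^2(\Om)$ verifying (\ref{taking}), with $\|D\eta\|_{L^2(\Om)}\leq C\|\eta\|_{L^2[0,\pi]}$, giving existence, uniqueness, and the boundedness of $D$ in one stroke. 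By Cauchy--Schwarz in $x$, continuity of $\Lambda_\eta$ reduces to the \emph{trace estimate}
\[
\Bigl\|\frac{\partial(A_1^{-1}g)}{\partial y}(\cdot,0)\Bigr\|_{L^2[0,\pi]} \m\leq\m C\|g\|_{L^2(\Om)} \FORALL g\in L^2(\Om),
\]
which is the heart of the matter and the main obstacle.

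To prove this trace estimate I would argue by spectral expansion, using the orthonormal basis $(\Psi_{kl})$ and eigenvalues $\lambda_{kl}=k^2+(2l-1)^2\pi^2/4$ produced in the proof of Proposition \ref{AZERO0}. Expanding $g=\sum c_{kl}\Psi_{kl}$ and differentiating $A_1^{-1}g=\sum\lambda_{kl}^{-1}c_{kl}\Psi_{kl}$ term by term, one obtains
\[
\frac{\partial(A_1^{-1}g)}{\partial y}(x,0) \m=\m \sqrt\pi\sum_{k\geq 1} a_k\cos(kx)\m,\qquad a_k \m=\m \sum_{l\geq 1}\frac{(2l-1)\,c_{kl}}{\lambda_{kl}}\m.
\]
Applying Cauchy--Schwarz in $l$ yields $|a_k|^2\leq S_k\sum_l|c_{kl}|^2$, where $S_k=\sum_l(2l-1)^2/\lambda_{kl}^2$. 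The key technical step is the scalar bound $S_k\leq C/k$ for $k\geq 1$; I would split the $l$-sum at the transition scale $(2l-1)\pi/2\approx k$, using $\lambda_{kl}\geq k^2$ on the low end (which gives a contribution $\lesssim k^{-4}\cdot k^3=k^{-1}$) and $\lambda_{kl}\geq (2l-1)^2\pi^2/4$ on the high end (which gives a tail $\lesssim k^{-1}$). Granted this, Parseval on $[0,\pi]$ combined with the trivial inequality $1/k\leq 1$ for $k\geq 1$ gives
\[
\sum_{k\geq 1}|a_k|^2 \m\leq\m C\sum_{k\geq 1}\frac{1}{k}\sum_{l\geq 1}|c_{kl}|^2 \m\leq\m C\sum_{k,l}|c_{kl}|^2 \m=\m C\|g\|_{L^2(\Om)}^2,
\]
which is exactly the trace estimate.

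A softer alternative would be to note that $A_1^{-1}$ maps $L^2(\Om)$ boundedly into $\Dscr(A_1)\subset\Hscr^2(\Om)$ and then invoke the standard trace theorem to conclude that $\partial_y(A_1^{-1}g)(\cdot,0)\in\Hscr^{1/2}(0,\pi)\hookrightarrow L^2[0,\pi]$ with continuous dependence on $g$. I would prefer the spectral route because it is entirely self-contained within the framework already set up in the paper, keeps all constants explicit, and avoids any discussion of corner regularity. Once the trace estimate is secured by either route, the conclusion of the proposition is immediate from the Riesz representation theorem applied to $\Lambda_\eta$.
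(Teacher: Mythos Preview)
Your overall strategy---show that the right-hand side of \eqref{taking} defines a bounded antilinear functional on $L^2(\Om)$ and apply the Riesz representation theorem---is exactly the paper's approach. Where you diverge is in the justification of the trace estimate: the paper takes precisely your ``softer alternative,'' observing that $A_1^{-1}\in\Lscr(L^2(\Om),\Hscr^2(\Om))$ (immediate from the definition of $\Dscr(A_1)$) and then invoking a standard trace theorem to conclude that $g\mapsto\partial_y(A_1^{-1}g)(\cdot,0)$ is bounded from $L^2(\Om)$ to $L^2[0,\pi]$. Your preferred spectral route is correct and gives the same conclusion; the splitting argument yielding $S_k\leq C/k$ is sound, and in fact recovers a little more than is needed (it shows the trace lands in $H_{1/2}$, consistent with the trace theorem). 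The trade-off is exactly as you describe: your explicit computation is self-contained and avoids any appeal to trace theory on a domain with corners, at the cost of a page of estimates, while the paper's two-line argument leans on standard machinery. Either is fine here.
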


\begin{proof}
We first note from Proposition \ref{AZERO0} that $A_1^{-1}\in
\Lscr(L^2(\Om),\Hscr^2(\Om))$. Thus, by a standard trace theorem the
map $g\mapsto \frac{\partial(A_1^{-1}g)}{\partial y}(\cdot,0)$ is
bounded from $L^2(\Om)$ to $L^2[0,\pi]$. Consequently, the right-hand
side of \rfb{taking} defines an antilinear functional of the argument
$g\in L^2(\Om)$, and the result follows by applying the Riesz
representation theorem. (See also \cite[Sect.~10.6]{Obs_Book}.)
\end{proof}

\begin{remark} \label{Ephraim_Mirvis} {\rm
For every $\eta\in H$, we have $D\eta\in C^\infty(\Om)$ and
$\Delta(D\eta)=0$. Indeed, this follows by an argument that is similar
to the one used in the proof of \cite[Proposition 10.6.2]{Obs_Book}:
We take $g=\Delta\varphi$ with $\varphi\in\Dscr(\Om)$ in \rfb{taking}
to see that $\Delta(D\eta)=0$ in the sense of distributions. It
follows from \cite[Remark 13.5.6]{Obs_Book} that $D\eta\in\Hscr^n
_{loc}(\Om)$ for every $n\in\nline$. Then we use the embedding
$\Hscr_{\rm loc}^n(\Om)\subset C^m(\Om)$ for $n>1+m$ ($m\in\nline$)
(see \cite[Remark 13.4.5]{Obs_Book}), so that indeed $D\eta\in 
C^\infty(\Om)$, and hence $\Delta(D\eta)=0$. Moreover, if $D\eta\in 
C^1(\overline\Om)$, then $D\eta$ is the unique function
in $C^2(\Om)\cap C(\overline\Om)$ that satisfies, in the classical
sense, the following boundary value problem: \vspace{-1mm}
\begin{equation} \label{LaplaceDeta}
   \left\{ \begin{aligned} &\Delta (D\eta)(x,y) \m=\m 0 \quad&\quad
   ((x,y)\in\Om),\\ &(D\eta)(x,0) \m=\m \eta(x),\quad \frac{\partial
   (D\eta)}{\partial y}(x,-1) \m=\m 0\quad&\quad \qquad (x\in(0,\pi)),
   \\ &\frac{\partial (D\eta)}{\partial x}(0,y) \m=\m 0, \quad \frac
   {\partial (D\eta)}{\partial x}(\pi,y) \m=\m 0\quad&\quad\qquad
   (y\in (-1,0)). \end{aligned} \right.
\end{equation}
To see this, we take in \rfb{taking} $g=\Delta f$, where $f\in\Dscr
(A_1)$, and use integration by parts, which yields that 
$$ \int_0^\pi \eta(x)\overline{\frac{\partial f}{\partial y}(x,0)}
   \dd x \m=\m \int_0^\pi \left[ (D\eta)\overline{\frac{\partial f}
   {\partial y}} \right](x,0) \dd x \qquad \qquad \qquad \qquad \m$$
$$ + \int_0^\pi \left[\frac{\partial D\eta}{\partial y}\overline{f}
   \right](x,-1) \dd x + \int_{-1}^0 \left[\frac{\partial D\eta}
   {\partial x}\overline{f}\right](0,y)\dd y - \int_{-1}^0 \left[
   \frac{\partial D\eta}{\partial x}\overline{f}\right](\pi,y)\dd y
   \m.$$
If we choose $f\in\Dscr(A_1)$ such that $f=0$ on the lateral
boundaries and the bottom of $\Om$, we obtain that $(D\eta)(x,0)=\eta
(x)$ for almost every $x\in[0,\pi]$. By choosing suitable other test
functions $f\in \Dscr(A_1)$ (we omit the details), we can obtain also
the remaining three equalities in \rfb{LaplaceDeta}.}
\end{remark}

\begin{remark} {\rm
The term ``partial Dirichlet map'' comes from the fact that $D$
acts on the upper boundary of $\Om$ rather than the entire boundary
$\partial\Om$.}
\end{remark}

\begin{lemma} \label{Deta}
For every $\eta\in H$, $D\eta$ is given by
\begin{equation} \label{Detaformula}
   (D\eta)(x,y) = \sum_{k\in\nline} \frac{\langle\eta,\varphi_k
   \rangle}{\cosh{k}}\varphi_k(x)\cosh{[k(y+1)]} \FORALL x,y\in\Om,
\end{equation}
where the functions $\varphi_k$ have been introduced in
\rfb{mare_definitie}. Moreover, for every $\eta\in H_3$ we have
$D\eta\in C^2(\overline\Om)$.
\end{lemma}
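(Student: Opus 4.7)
My plan is to verify that the series in \rfb{Detaformula}, which I denote by $v$, satisfies the identity \rfb{taking} that characterizes $D\eta$, and then to establish uniform $C^2$ convergence when $\eta\in H_3$. Convergence of the series in $L^2(\Om)$ is easy: Parseval in the $x$-variable gives $\|v\|_{L^2(\Om)}^2=\sum_{k\geq 1}|\eta_k|^2\int_{-1}^0\cosh^2[k(y+1)]/\cosh^2 k\,\dd y$, and the last integral is $O(1/k)$, so $\|v\|_{L^2(\Om)}\leq C\|\eta\|_H$.

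To identify $v=D\eta$, I would verify \rfb{taking} with $g$ ranging over the orthonormal basis $\{\Psi_{kl}\}$ from Proposition~\ref{AZERO0}; by continuity of both sides in $g$, this suffices. For the right-hand side of \rfb{taking}, we have $A_1^{-1}\Psi_{kl}=\Psi_{kl}/\l_{kl}$, and a direct computation of $\partial_y\Psi_{kl}(x,0)$ combined with $\cos(kx)=\sqrt{\pi/2}\,\varphi_k(x)$ gives the value $-(2l-1)\pi\eta_k/(\sqrt 2\,\l_{kl})$ (with $\eta_0=0$ since $\eta\in H$). For the left-hand side, orthogonality of the family $(\varphi_k)$ collapses the series in the definition of $v$ to a single term, and the remaining work is to evaluate
\[
   I_{kl} \m=\m \int_{-1}^0 \cosh[k(y+1)]\m\sin\!\left[(2l-1)\tfrac{\pi}{2}y\right]\dd y.
\]
Integrating by parts twice, and using $\cos[(2l-1)\pi/2]=0$ together with $\sin[(2l-1)\pi/2]=(-1)^{l-1}$, produces the closed form $I_{kl}=-(2l-1)\pi\cosh k/(2\l_{kl})$. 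The factor $\cosh k$ cancels the denominator appearing in $v$, and the left-hand side of \rfb{taking} matches the right-hand side exactly. This integration-by-parts step is the only computational hurdle, and it is routine.

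For the regularity claim, the goal is to show that when $\eta\in H_3$ the series for $v$ converges together with its partial derivatives of order at most two uniformly on $\overline\Om$. Each summand is harmonic and smooth on $\overline\Om$; any differentiation of order up to two multiplies it by a factor of size $O(k^2)$, whether derivatives act on $\varphi_k(x)$ or on $\cosh[k(y+1)]$, while the ratio $\cosh[k(y+1)]/\cosh k$ stays bounded by $1$ on $[-1,0]$. Thus the sup-norm on $\overline\Om$ of the $k$-th term and all its derivatives of order $\leq 2$ is controlled by $Ck^2|\eta_k|$, and Cauchy--Schwarz yields
\[
   \sum_{k\geq 1} k^2|\eta_k| \m\leq\m \Bigl(\sum_{k\geq 1} k^6|\eta_k|^2\Bigr)^{1/2}\Bigl(\sum_{k\geq 1} k^{-2}\Bigr)^{1/2} \m<\m \infty,
\]
which provides the required uniform convergence and hence $D\eta=v\in C^2(\overline\Om)$.
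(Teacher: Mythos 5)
Your proof is correct, and the identification step takes a genuinely different route from the paper's. The paper first computes $D\varphi_k$ by observing (via Remark \ref{Ephraim_Mirvis}) that the explicit function $\sqrt{2/\pi}\,\cos(kx)\cosh[k(y+1)]/\cosh k$ is a classical solution of the boundary value problem \rfb{LaplaceDeta} with datum $\varphi_k$, hence equals $D\varphi_k$ by uniqueness; it then passes to general $\eta\in H$ using the $L^2$-convergence of the series together with the boundedness $D\in\Lscr(L^2[0,\pi],L^2(\Om))$ from Proposition \ref{def-D}. You instead verify the weak defining identity \rfb{taking} directly, testing against the eigenbasis of $A_1$ and evaluating $I_{kl}=-(2l-1)\pi\cosh k/(2\l_{kl})$ by two integrations by parts; I checked this computation and the resulting match of both sides, and it is right. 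Your route has the merit of never invoking the classical-solution characterization (which in the paper requires the a priori $C^1(\overline\Om)$ regularity discussed in Remark \ref{Ephraim_Mirvis}), at the cost of one explicit integral; the paper's route is computation-free but leans on that remark. One small caveat you inherit from Proposition \ref{AZERO0}: as indexed there, the family $\{\Psi_{kl}\}_{k,l\in\nline}$ omits the $x$-constant modes $l\mapsto\frac{\sqrt2}{\sqrt\pi}\sin[(2l-1)\frac{\pi}{2}y]$, which are needed for completeness in $L^2(\Om)$; your "testing against a total family plus continuity in $g$" argument needs them, but both sides of \rfb{taking} vanish on these modes (the left because $\int_0^\pi\varphi_j\,\dd x=0$ for $j\geq1$, the right because $\eta$ has zero mean), so the argument goes through with a one-line addition. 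The $C^2(\overline\Om)$ part of your proof — sup-norm bound $Ck^2|\eta_k|$ on each summand and its derivatives of order $\leq 2$, followed by Cauchy--Schwarz against $\sum k^{-2}$ — is essentially identical to the paper's.
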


\begin{proof}
Using Remark \ref{Ephraim_Mirvis} it is easily checked that for every
$k\in\nline$ we have
\begin{equation} \label{Dformula}
   (D\varphi_k)(x,y) = \sqrt{\frac{2}{\pi}}\frac{\cos{(kx)}
   \cosh{[k(y+1)]}}{\cosh{(k)}} \FORALL x,y\in\Om.
\end{equation}
On the other hand, we can see that the right-hand side of
\rfb{Detaformula} converges in $L^2(\Om)$. This fact, together with
\rfb{Dformula} clearly implies \rfb{Detaformula}.

Moreover, for every $\alpha\in \{0,1,2\}$ we have
$$ \left|\frac{\partial^{\alpha,2-\alpha}}{\partial x^\alpha
   \partial y^{2-\alpha}}\left(\frac{\cos{(kx)}\cosh{(k(y+1))}}
   {\cosh{(k)}}\right)\right|\leq k^2 \FORALL k\in\nline,\
   x,y\in\Om.$$
Using the Cauchy-Schwarz inequality and the fact that
$\sum_{k\in\nline}1/k^2=\pi^2/6$,
\begin{multline*}
   \sum_{k\in\nline} \left|\frac{\partial^{\alpha,2-\alpha}}
   {\partial x^\alpha\partial y^{2-\alpha}}\left(\frac{\langle\eta,
   \varphi_k\rangle}{\cosh{k}}\varphi_k(x)\cosh{(k(y+1))}\right)
   \right| \\ \leq\m \sum_{k\in\nline} \frac{1}{k} \cdot k^3 \left|
   \langle\eta,\varphi_k\rangle\right| \m\leq\m \frac{\pi}{\sqrt 6}
   \m\|\eta\|_{H_3} \FORALL \eta\in H_3,\ x,y\in\Om.
\end{multline*}
Combining the last estimate with \rfb{Detaformula}, we obtain that
indeed $D\eta\in C^2(\overline\Om)$ for every $\eta\in H_3$.
\end{proof}

\begin{corollary} \label{DDmap}
Let $\gamma_0:C(\overline{\Om})\to C[-1,0]$ be the partial Dirichlet
trace operator defined by
$$ (\gamma_0 g)(y) \m=\m g(0,y) \FORALL g\in C(\overline{\Om}),\
   y\in [-1,0],$$
and let $D$ be the map defined in Proposition {\rm\ref{def-D}}. Then
$\tilde C_0$ defined by
$$ \tilde C_0\eta \m=\m \gamma_0 D\eta \FORALL \eta\in H_3$$
can be  uniquely extended to a bounded operator $C_0\in\Lscr(H,
L^2[-1,0])$.
\end{corollary}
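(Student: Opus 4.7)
The plan is to combine the explicit series from Lemma \ref{Deta} with Hilbert's double series inequality, which the abstract flags as a key tool. Since $\varphi_k(0)=\sqrt{2/\pi}$, evaluating the series for $D\eta$ at $x=0$ gives, for every $\eta\in H_3$,
\begin{equation*}
   (\tilde C_0\eta)(y) \m=\m \sqrt{\frac{2}{\pi}}\sum_{k\in\nline}
   \frac{\eta_k}{\cosh k}\cosh[k(y+1)] \qquad (y\in[-1,0]),
\end{equation*}
where $\eta_k=\langle\eta,\varphi_k\rangle$. The finite linear span of $\{\varphi_k\}_{k\in\nline}$ is contained in $H_3$ and dense in $H$, so it suffices to prove an estimate of the form $\|\tilde C_0\eta\|_{L^2[-1,0]}\leq M\|\eta\|_H$ on $H_3$; the bounded operator $C_0$ is then obtained, uniquely, by continuous extension.

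Squaring the above representation and integrating, after the substitution $s=y+1\in[0,1]$, produces
\begin{equation*}
   \|\tilde C_0\eta\|_{L^2[-1,0]}^2 \m=\m \frac{2}{\pi}\sum_{k,j\in
   \nline} b_{kj}\m\eta_k\overline{\eta_j}, \qquad b_{kj} \m=\m
   \int_0^1 \frac{\cosh(ks)\cosh(js)}{\cosh k\cosh j}\,\dd s.
\end{equation*}
The kernel admits the elementary upper bound
\begin{equation*}
   \frac{\cosh(ks)}{\cosh k} \m=\m \frac{e^{ks}+e^{-ks}}{e^k+e^{-k}}
   \m\leq\m 2\m e^{-k(1-s)} \qquad (s\in[0,1],\ k\in\nline),
\end{equation*}
which yields $0\leq b_{kj}\leq 4\int_0^1 e^{-(k+j)(1-s)}\,\dd s\leq 4/(k+j)$, and hence
\begin{equation*}
   \|\tilde C_0\eta\|_{L^2[-1,0]}^2 \m\leq\m \frac{8}{\pi}
   \sum_{k,j\in\nline} \frac{|\eta_k|\m|\eta_j|}{k+j}.
\end{equation*}

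Applying Hilbert's double series inequality, with its sharp constant $\pi$, to the right-hand side gives $\|\tilde C_0\eta\|_{L^2[-1,0]}^2\leq 8\|\eta\|_H^2$, the desired estimate. The only delicate point in this plan is extracting the bound $b_{kj}=O(1/(k+j))$ from the integral in a form that matches Hilbert's inequality; once this is available, the inequality supplies the boundedness essentially for free, and extension by density finishes the proof. I do not anticipate any serious technical obstacle.
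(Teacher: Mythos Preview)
Your proof is correct. The paper starts from the same series representation (obtained from Lemma~\ref{Deta} by setting $x=0$) but then simply asserts ``which implies that there exists a constant $K>0$ such that $\lVert \tilde C_0\eta\rVert_{L^2[-1,0]}^2 \leq K\lVert\eta\rVert_H^2$'' with no further argument. You actually supply that missing step: the pointwise bound $\cosh(ks)/\cosh k\leq 2e^{-k(1-s)}$ reduces the Gram kernel to a Hilbert-type matrix $O(1/(k+j))$, and Hilbert's inequality then gives boundedness on $\ell^2$. This is exactly the mechanism the paper itself invokes later, in the proof of Theorem~\ref{NN-map}, so your argument is fully in the paper's spirit and in fact more complete than the paper's own proof of this corollary.
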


\begin{proof}
According to Lemma \ref{Deta}, we have
$$ (\tilde C_0\eta)(y) \m=\m \sum_{k\in\nline} \sqrt{\frac{2}{\pi}}
   \frac{\langle\eta,\varphi_k\rangle}{\cosh{k}}\cosh{[k(y+1)]}
   \ \FORALL \eta\in H_3\m,\ y\in[-1,0] \m,$$
which implies that there exists a constant $K>0$ such that
$$ \lVert \tilde C_0\eta\rVert_{L^2[-1,0]}^2 \m\leq\m K \sum_{k\in
   \nline} |\langle\eta,\varphi_k\rangle|^2 =\m K\lVert \eta
   \rVert_{H}^2 \FORALL \eta\in H_3, \vspace{-1.5mm}$$
which shows that $\tilde C_0$ can be extended as claimed.
\end{proof}

\begin{lemma} \label{pei_su_lemma}
The partial Dirichlet map $D$ defined in Proposition {\rm\ref{def-D}}
is bounded from $H_\half$ to $\Hscr^1(\Om)$, i.e. $D\in\Lscr(H_\half,
\Hscr^1(\Om))$. Moreover, \vspace{-1.5mm}
\begin{equation}\label{urma_buna}
   (D\eta)(x,0) \m=\m \eta(x) \FORALL \eta\in H_\half \m,\ \mbox{
   equality in }\ L^2[0,\pi] \m,
\end{equation} \vspace{-1.5mm}
\begin{equation} \label{parti_bune}
   \int_\Om \nabla(D\eta)\cdot \overline{\nabla \Psi} \m\dd x \m\dd
   y \m=\m 0 \FORALL \eta\in H_\half,\ \Psi\in \Hscr^1_{top}(\Om).
\end{equation}
\end{lemma}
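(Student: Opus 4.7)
My plan is to use the explicit Fourier series \rfb{Detaformula} for $D\eta$ together with a density argument from $H_3$ to $H_\half$.

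First I would establish the boundedness $D\in\Lscr(H_\half,\Hscr^1(\Om))$ by direct calculation on this series. Writing $\eta=\sum_{k\geq 1}\eta_k\varphi_k$, termwise differentiation of \rfb{Detaformula} together with the orthogonality of $\{\cos(kx)\}$ and of $\{\sin(kx)\}$ on $[0,\pi]$ decouples the squared $L^2(\Om)$--norms of $D\eta$, $\partial_x(D\eta)$ and $\partial_y(D\eta)$ into sums $\sum_k a_k|\eta_k|^2$, with coefficients proportional to
\[ \frac{1}{\cosh^2 k}\int_{-1}^0\cosh^2[k(y+1)]\,\dd y,\ \ \frac{k^2}{\cosh^2 k}\int_{-1}^0\cosh^2[k(y+1)]\,\dd y,\ \ \frac{k^2}{\cosh^2 k}\int_{-1}^0\sinh^2[k(y+1)]\,\dd y. \]
A routine evaluation of these hyperbolic integrals, combined with the identity $\sinh(2k)/\cosh^2 k=2\tanh k\leq 2$ and the exponential decay of $k^j/\cosh^2 k$, shows that each coefficient is bounded above by $C\m k$ uniformly in $k$. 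Summing yields $\|D\eta\|_{\Hscr^1(\Om)}^2\leq C\sum_k k|\eta_k|^2\leq C\|\eta\|_{H_\half}^2$, which in particular guarantees that the series \rfb{Detaformula} converges in $\Hscr^1(\Om)$.

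Next, for the trace identity \rfb{urma_buna}: for $\eta\in H_3$ the equality $(D\eta)(x,0)=\eta(x)$ holds pointwise by Remark \ref{Ephraim_Mirvis}, hence in $L^2[0,\pi]$. Since $H_3$ is dense in $H_\half$ (any $\eta\in H_\half$ is the $H_\half$--limit of its partial Fourier sums, all of which lie in $H_3$), since $D\in\Lscr(H_\half,\Hscr^1(\Om))$ by the previous step, and since the Dirichlet trace at $y=0$ is continuous from $\Hscr^1(\Om)$ into $L^2[0,\pi]$, the identity extends by density to all $\eta\in H_\half$.

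For the orthogonality \rfb{parti_bune}: for $\eta\in H_3$ we have $D\eta\in C^2(\overline\Om)$ by Lemma \ref{Deta}, and $D\eta$ solves the boundary value problem \rfb{LaplaceDeta} in the classical sense. Applying the first Green formula to $D\eta$ and any $\Psi\in\Hscr^1_{top}(\Om)$ gives
\[ \int_\Om\nabla(D\eta)\cdot\overline{\nabla\Psi}\,\dd x\,\dd y \m=\m -\int_\Om\Delta(D\eta)\m\overline\Psi\,\dd x\,\dd y+\int_{\partial\Om}\frac{\partial(D\eta)}{\partial\nu}\m\overline\Psi\,\dd\sigma. \]
The volume term vanishes since $\Delta(D\eta)=0$, and the boundary term vanishes because the Neumann trace of $D\eta$ is zero on the bottom and the two lateral edges by \rfb{LaplaceDeta}, while the Dirichlet trace of $\Psi$ vanishes on the top by definition of $\Hscr^1_{top}(\Om)$. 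Thus \rfb{parti_bune} holds for $\eta\in H_3$. Since the bilinear form on the left is continuous on $H_\half\times\Hscr^1_{top}(\Om)$ by the boundedness of $D$ just proved, density of $H_3$ in $H_\half$ extends \rfb{parti_bune} to all $\eta\in H_\half$.

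The main obstacle I anticipate is the careful arithmetic in the first step: verifying that the combination $\frac{k^2}{\cosh^2 k}\int_{-1}^0\sinh^2[k(y+1)]\,\dd y$ grows exactly like $O(k)$ rather than like $O(k^2)$. This is the quantitative reason why $D$ gains one half derivative in passing from $H_\half$ to $\Hscr^1(\Om)$, and the rest of the argument is a standard trace-and-density exercise.
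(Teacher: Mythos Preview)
Your proposal is correct and follows essentially the same route as the paper: explicit estimation of the $\Hscr^1(\Om)$-norm via the series \rfb{Detaformula} using orthogonality of the trigonometric system and the identity $\sinh(2k)/\cosh^2 k=2\tanh k$, followed by the density argument from $H_3$ to $H_\half$ (via Lemma~\ref{Deta} and Remark~\ref{Ephraim_Mirvis}) for both \rfb{urma_buna} and \rfb{parti_bune}. The only cosmetic difference is that the paper treats just $\partial_x(D\eta)$ in detail, says the $\partial_y$ estimate is similar, and cites Proposition~\ref{def-D} for the $L^2$ bound, whereas you bundle all three norms into a single computation.
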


\begin{proof} According to Lemma \ref{Deta}, $D\eta$ is given by
\rfb{Detaformula}. Since $\left\{\sqrt{\frac{2}{\pi}}\sin{(kx)}
\right\}_{k\in\nline}$ is an orthnormal basis in $L^2[0,\pi]$, we
have that for every $\eta\in H_\half$, 
$$ \begin{aligned}
    \left\lVert \frac{\partial (D\eta)}{\partial x}\right\rVert
    _{L^2(\Om)}^2 \m&=\m \int_{-1}^0\int_0^\pi\left| \sum_{k\in
	\nline}\sqrt{\frac{2}{\pi}} \frac{k\langle\eta,\varphi_k
	\rangle}{\cosh{k}}\cosh{[k(y+1])}\sin{(kx)}\right|^2\, \dd x\,
    \dd y \\ &\leq\m \sum_{k\in\nline} \frac{\left|k\langle\eta,
	\varphi_k\rangle\right|^2}{\cosh^2{k}} \int_{-1}^0 \cosh^2
    {[k(y+1)]}\, \dd y\\ &=\m \sum_{k\in\nline} \frac{k^2\left|
	\langle\eta,\varphi_k\rangle\right|^2}{\cosh^2{k}}+
    \sum_{k\in\nline} \frac{k\left|\langle\eta,\varphi_k\rangle
	\right|^2}{2\cosh^2{k}}\sinh(2k), \end{aligned}$$
which clearly implies that there exists $K_1>0$ such that
$$ \left\lVert \frac{\partial (D\eta)}{\partial x}\right\rVert
   _{L^2(\Om)} \m\leq\m K_1 \|\eta\|_\half \FORALL \eta\in
   H_\half.$$
A similar estimate for $\|\frac{\partial (D\eta)}{\partial y}\|
_{L^2}$ can be obtained in a completely similar manner. Moreover, we
know from Proposition \ref{def-D} that $\|D\eta\|_{L^2}$ is also
bounded by a similar estimate. Recalling \rfb{Jeremy_is_out} we
conclude that $D\in\Lscr(H_\half,\Hscr^1(\Om))$.

Formula \rfb{urma_buna} in the lemma follows from the last part
of Lemma \ref{Deta} together with Remark \ref{Ephraim_Mirvis} and
the density of $H_3$ in $H_\half$.

To prove \rfb{parti_bune} first we assume that $\eta\in H_3$ so that,
according to Remark \ref{Ephraim_Mirvis}, $D\eta$ is the unique
classical solution of \rfb{LaplaceDeta}. Multiplying the first
equation in \rfb{LaplaceDeta} by $\overline\Psi\in\Hscr^1_{top}(\Om)$
and integrating by parts, it follows that \rfb{parti_bune} holds for
$\eta\in H_3$. Using the density of $H_3$ in $H_\half$ and the fact
that $D\in\Lscr(H_\half,\Hscr^1(\Om))$, it follows that indeed
\rfb{parti_bune} holds for all $\eta\in H_\half$.
\end{proof}

The second important map constructed in this section is a partial
Neumann map. To this aim, recall the space $\Hscr^1_{top}(\Om)$
introduced in \rfb{top_definition} and notice that, due the
version of the Poincar\'e inequality in \cite[Theorem 13.6.9]
{Obs_Book}, the sesquilinear form on $\Hscr^1_{top}(\Om)$ given by
\vspace{-2mm}
\begin{equation} \label{forme_proaste}
   a[f,g] \m=\m \int_\Om \nabla f\cdot\overline{\nabla g}\, \dd x\,
   \dd y \FORALL f,\ g\in\Hscr^1_{top}(\Om)
\end{equation}
defines an inner product on $\Hscr^1_{top}(\Om)$ which is equivalent
to the one inherited from $\Hscr^1(\Om)$. These facts, combined with
the continuity of the Dirichlet trace (as an operator from $\Hscr^1
(\Om)$ to $L^2(\partial\Om)$), imply the following:

\begin{proposition} \label{def-N}
For every $v\in L^2[-1,0]$ there exists a unique function $Nv\in
\Hscr^1_{top}(\Om)$ such that \vspace{-2mm}
\begin{equation} \label{equality_N}
   \int_\Om \nabla (Nv)\cdot \overline{\nabla g}\, \dd x\, \dd y
   \m=\m \int_{-1}^0 v(y) \overline{g(0,y)}\, \dd y \FORALL g\in
   \Hscr^1_{top}(\Om).
\end{equation}
Moreover, the operator $N$, called a {\em partial Neumann map}, is
linear and bounded from $L^2[-1,0]$ to $\Hscr^1_{top}(\Om)$.
\end{proposition}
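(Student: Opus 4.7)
The plan is to obtain $Nv$ as the Riesz representative of the antilinear functional
\begin{equation*}
   \Lambda_v(g) \m=\m \int_{-1}^0 v(y)\overline{g(0,y)}\, \dd y
\end{equation*}
acting on the Hilbert space $(\Hscr^1_{top}(\Om), a[\cdot,\cdot])$. First I would formally record that $\Hscr^1_{top}(\Om)$, endowed with the form $a[\cdot,\cdot]$ from \rfb{forme_proaste}, is a Hilbert space. The paragraph preceding the statement already observes that $a[\cdot,\cdot]$ is an inner product equivalent to the one inherited from $\Hscr^1(\Om)$, by the version of the Poincar\'e inequality in \cite[Theorem 13.6.9]{Obs_Book}; completeness is inherited because $\Hscr^1_{top}(\Om)$ is the kernel of the (continuous) Dirichlet trace onto the top edge, hence a closed subspace of $\Hscr^1(\Om)$.

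Next I would check that $\Lambda_v$ is a bounded antilinear functional on $(\Hscr^1_{top}(\Om),a)$. Since the Dirichlet trace is continuous from $\Hscr^1(\Om)$ into $L^2(\partial\Om)$, its restriction to the left edge, $g\mapsto g(0,\cdot)$, is continuous from $\Hscr^1_{top}(\Om)$ into $L^2[-1,0]$. Combining this with the Cauchy-Schwarz inequality in $L^2[-1,0]$ and the norm equivalence stated above yields a constant $K>0$, independent of $v$, such that
\begin{equation*}
   |\Lambda_v(g)| \m\leq\m \|v\|_{L^2[-1,0]}\|g(0,\cdot)\|_{L^2[-1,0]} \m\leq\m K\|v\|_{L^2[-1,0]}\m a[g,g]^{\half}.
\end{equation*}

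Applying the Riesz representation theorem in $(\Hscr^1_{top}(\Om),a)$ then delivers a unique $Nv\in\Hscr^1_{top}(\Om)$ with $a[Nv,g]=\Lambda_v(g)$ for every $g\in\Hscr^1_{top}(\Om)$, which is precisely \rfb{equality_N}. Linearity of $v\mapsto Nv$ follows from the uniqueness in Riesz together with the linearity of $v\mapsto\Lambda_v$. Boundedness follows by testing with $g=Nv$: the estimate above gives $a[Nv,Nv]\leq K\|v\|_{L^2[-1,0]}\m a[Nv,Nv]^{\half}$, hence $a[Nv,Nv]^{\half}\leq K\|v\|_{L^2[-1,0]}$, and then the norm equivalence upgrades this to $\|Nv\|_{\Hscr^1(\Om)}\leq K'\|v\|_{L^2[-1,0]}$. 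Since all the ingredients (Poincar\'e, continuity of the Dirichlet trace, Riesz representation) are already in place, I do not expect any substantive obstacle; the one point meriting care is verifying that Poincar\'e applies on $\Hscr^1_{top}(\Om)$, which is guaranteed because functions in this subspace have vanishing trace on a portion of $\partial\Om$ of positive one-dimensional measure.
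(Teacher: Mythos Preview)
Your proof is correct and is essentially the same as the paper's. The paper invokes the Lax-Milgram theorem with the sesquilinear form $a[\cdot,\cdot]$, but since that form is in fact an inner product on $\Hscr^1_{top}(\Om)$, Lax-Milgram reduces precisely to the Riesz representation argument you wrote out; if anything, your version is more detailed than the paper's one-line proof.
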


\begin{proof}
The results follow from the Lax-Milgram theorem by using the
sesqui\-linear form $a[\cdot,\cdot]$ introduced in
\rfb{forme_proaste} (see also \cite[Proposition 7.1]{thinairI}).
\end{proof}

\begin{remark} \label{nicipedeparte} {\rm
The above proposition can be formulated also as follows: for every
$v\in L^2[-1,0]$ the boundary value problem
\begin{equation} \label{Deltaf}
   \left\{ \begin{aligned}
   & \Delta f(x,y) = 0  \quad&\quad((x,y)\in\Om),\\ & f(x,0) = 0,
   \quad \frac{\partial f}{\partial y}(x,-1) = 0\quad&\quad
   (x\in (0,\pi)),\\ & \frac{\partial f}{\partial x}(0,y) = -v,
   \quad \frac{\partial f}{\partial x}(\pi,y) = 0 \quad & \quad
   (y\in (-1,0)), \end{aligned} \right.
\end{equation}
admits a unique weak solution $f=Nv\in\Hscr^1_{top}(\Om)$. If
$f\in C^2(\overline\Om)$ and $v\in C[-1,0]$, then $f=Nv$ is the
unique classical solution of \rfb{Deltaf}. }
\end{remark}

We note that the sequence $(\psi_k)_{k\in\nline}$ defined by
\begin{equation} \label{DEFPSIK}
   \psi_k(y) = \sqrt{2}\cos{\left[(2k-1)\frac{\pi}{2}(y+1)\right]}
   \FORALL k\in\nline,\ y\in [-1,0],
\end{equation}
is an orthonormal basis in $L^2[-1,0]$ (see \cite[[Sect.~2.6]
{Obs_Book}). We can use this basis to construct the scale of 
Hilbert spaces $(\Uscr_\beta)_{\beta\geq 0}$ defined by $\Uscr_0=
L^2[-1,0]$ and (for $\beta>0$)
\begin{equation} \label{Hscrbeta}
   \Uscr_\beta \m=\m \left\{v\in \Uscr_0\ \ \left|\ \ \sum_{k\in
   \nline} (2k-1)^{2\beta} \left|\int_{-1}^0 v(y)\psi_k(y)\, \dd y
   \right|^2 < \infty\right\}\right. \m,
\end{equation}
with the inner products $\left(\langle \cdot,\cdot\rangle_\beta
\right)_{\beta\geq 0}$ given, for every $v,\m\chi\in\Uscr_\beta$,
by \vspace{-1mm}
$$ \langle v,\chi\rangle_{\Uscr_\beta} = \sum_{k\in\nline}
   (2k-1)^{2\beta} \left(\int_{-1}^0 v(y)\psi_k(y)\, \dd y \right)
   \overline{\left(\int_{-1}^0 \chi(y)\psi_k(y)\, \dd y\right)}.$$

\begin{lemma} \label{Nv}
Let $N$ be the operator defined in Proposition {\rm\ref{def-N}}.
Then for every $v\in L^2[-1,0]$ and every $(x,y)\in\Om$ we have
\begin{equation}\label{Nvformula}
   (Nv)(x,y) = \sum_{k\in\nline} a_k\cosh\left[(2k-1)\frac{\pi}
   {2}(x-\pi)\right]\cos{\left[(2k-1)\frac{\pi}{2}(y+1)\right]},
\end{equation}
with convergence in $\Hscr^1_{top}(\Om)$, where \vspace{-1mm}
$$ a_k \m=\m \frac{2\sqrt{2}\langle v,\psi_k\rangle}{(2k-1)\pi
   \sinh{\left[(2k-1)\frac{\pi^2}{2}\right]}} \FORALL k\in\nline.$$
Moreover, for every $v\in\Uscr_2$ we have $Nv\in
C^2(\overline{\Om})$.
\end{lemma}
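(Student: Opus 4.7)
The natural strategy is separation of variables. The basis $(\psi_k)_{k\in\nline}$ in \rfb{DEFPSIK} already satisfies the $y$--boundary conditions in \rfb{Deltaf}: $\psi_k(0)=0$ and $\psi_k'(-1)=0$. So I would seek $Nv$ in the form $\sum_k c_k(x)\psi_k(y)$; the equation $\Delta(Nv)=0$ combined with $\psi_k''=-[(2k-1)\pi/2]^2\psi_k$ then forces $c_k''=[(2k-1)\pi/2]^2 c_k$, while the Neumann condition at $x=\pi$ forces $c_k(x)=A_k\cosh{[(2k-1)\frac{\pi}{2}(x-\pi)]}$. Matching the Neumann condition at $x=0$ with the expansion $-v=-\sum_k\langle v,\psi_k\rangle\psi_k$ determines $A_k$, and the constant $a_k$ in \rfb{Nvformula} is then $\sqrt 2\m A_k$ (the $\sqrt 2$ coming from $\psi_k=\sqrt 2\cos{[(2k-1)\frac{\pi}{2}(y+1)]}$).

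To justify this rigorously, I would first show that the series on the right of \rfb{Nvformula} converges in $\Hscr^1_{top}(\Om)$ for every $v\in L^2[-1,0]$. The key observation is that $\cosh{[(2k-1)\frac{\pi}{2}(x-\pi)]}\leq\cosh{[(2k-1)\pi^2/2]}$ for $x\in[0,\pi]$ and that $\coth{[(2k-1)\pi^2/2]}$ is bounded, so $|a_k|\cosh{[(2k-1)\pi^2/2]}$ decays like $|\langle v,\psi_k\rangle|/(2k-1)$. Applying Parseval in $y$ and explicitly evaluating $\int_0^\pi\cosh^2$ and $\int_0^\pi\sinh^2$, the $L^2(\Om)$ norms of the series and of its first-order partial derivatives become convergent sums controlled by $\|v\|_{L^2[-1,0]}$. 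The trace condition $f(x,0)=0$ holds term-by-term because $\cos{[(2k-1)\pi/2]}=0$, so the sum $f$ lies in $\Hscr^1_{top}(\Om)$. To identify $f$ with $Nv$, I would test \rfb{equality_N} against smooth $g$ vanishing on $y=0$ and integrate by parts in each partial sum; the boundary contributions on $y=-1$, $y=0$ and $x=\pi$ vanish by construction of the $k$-th term, while the contribution at $x=0$ reduces, via orthonormality of $(\psi_k)$ on $[-1,0]$, to $\int_{-1}^0 v\m\overline{g(0,\cdot)}\,\dd y$. The $\Hscr^1$-convergence established above lets one pass to the limit, and uniqueness in Proposition \ref{def-N} gives $f=Nv$.

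For the $C^2(\overline\Om)$ statement when $v\in\Uscr_2$, each second-order partial derivative of the $k$-th term of \rfb{Nvformula} is uniformly bounded on $\overline\Om$ by $C(2k-1)^2|a_k|\cosh{[(2k-1)\pi^2/2]}$, and the estimate already used gives $(2k-1)^2|a_k|\cosh{[(2k-1)\pi^2/2]}\leq C'(2k-1)|\langle v,\psi_k\rangle|$. By the Cauchy--Schwarz inequality,
$$ \sum_{k\in\nline}(2k-1)\m|\langle v,\psi_k\rangle| \m\leq\m
   \left(\sum_{k\in\nline}\frac{1}{(2k-1)^2}\right)^{\!1/2}
   \|v\|_{\Uscr_2} \m<\m \infty, $$
so the term-by-term sums of the partial derivatives up to order two converge absolutely and uniformly on $\overline\Om$, whence $Nv\in C^2(\overline\Om)$.

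The main technical obstacle I anticipate is the bookkeeping of the $\cosh$ and $\sinh$ factors, whose arguments grow linearly in $k$ and thus individually blow up exponentially. They appear however only in bounded ratios: $\coth{[(2k-1)\pi^2/2]}\to 1$, and $\cosh{[(2k-1)\frac{\pi}{2}(x-\pi)]}/\sinh{[(2k-1)\pi^2/2]}\leq\coth{[(2k-1)\pi^2/2]}$ on $[0,\pi]$. Once this cancellation is tracked carefully, all the series under consideration become dominated by geometrically- or polynomially-controlled sums in $|\langle v,\psi_k\rangle|$, and the remaining steps are routine.
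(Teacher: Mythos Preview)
Your proposal is correct and follows the same separation-of-variables strategy as the paper, including the $C^2(\overline\Om)$ argument via Cauchy--Schwarz. The one noteworthy difference is in how you justify convergence in $\Hscr^1_{top}(\Om)$ and the identification with $Nv$: you estimate the $\Hscr^1$ norms of the partial sums directly and then verify the variational identity \rfb{equality_N} by hand, invoking uniqueness. The paper instead first computes $N\psi_k$ explicitly (using Remark~\ref{nicipedeparte}) and then observes that, since $N\in\Lscr(L^2[-1,0],\Hscr^1_{top}(\Om))$ by Proposition~\ref{def-N} and $v=\sum_k\langle v,\psi_k\rangle\psi_k$ converges in $L^2[-1,0]$, the series $\sum_k\langle v,\psi_k\rangle N\psi_k$ automatically converges to $Nv$ in $\Hscr^1_{top}(\Om)$. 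This shortcut spares you both the direct $\Hscr^1$ estimates and the separate identification step; your route is sound but longer than necessary.
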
	

\begin{proof}
By using Remark \ref{nicipedeparte} and separation of variables, we
see that
\begin{equation} \label{Gail_skiing}
   (N\psi_k)(x,y) \m=\m \frac{2\psi_k(y)\cosh\left[(2k-1)\frac{\pi}
   {2}(x-\pi)\right]}{(2k-1)\pi\sinh{\left[(2k-1)\frac{\pi^2}{2}
   \right]}} \FORALL k\in\nline,
\end{equation}
for all $(x,y)\in\Om$. Since $Nv=\sum_{k\in\nline}\langle v,\psi_k
\rangle N\psi_k$, this clearly implies \rfb{Nvformula}, with
convergence in $\Hscr^1_{top}(\Om)$ due to Proposition \ref{def-N}.
For every $j\in\{0,1,2\}$,
$$ \left|\frac{\partial^{j,2-j}}{\partial x^j\partial y^{2-j}}
   \left(N\psi_k\right)(x,y)\right|\leq \frac{\sqrt{2}}{2}\pi(2k-1)
   \FORALL k\in\nline,\ (x,y)\in\Om,$$
so that for every $v\in\Uscr$, the series $Nv=\sum_{k\in\nline}
\langle v,\psi_k\rangle N\psi_k$ converges in $C^2(\overline\Om)$ if
the sequence $k\langle v,\psi_k\rangle$ is in $l^1$. For this (by an
argument similar to the one in the proof of Lemma \ref{Deta}) it is
sufficient if the sequence $k^2\langle v,\psi_k\rangle$ is in $l^2$,
which is precisely the condition $v\in\Uscr_2$.
\end{proof}

\section{Partial Dirichlet to Neumann and Neumann to Neumann
         maps} \label{sec5} 

In this section we give an explicit construction of the operators
allowing us to recast \rfb{gravity-1} as a well-posed linear control
system. Recall the orthonormal basis $(\varphi_k)_{k\in\nline}$ in
$H$ introduced in \rfb{mare_definitie} and the corresponding spaces
$H_\alpha$. First we note a direct consequence of Proposition
\ref{def-D} and of Lemma \ref{Deta}.

\begin{corollary} \label{pe_deoarte}
Let $\gamma_1:C^1(\overline\Om)\to C[0,\pi]$ be the partial
Neumann trace operator defined by
$$ (\gamma_1f)(x) \m=\m \frac{\partial f}{\partial y}(x,0) \FORALL
   f\in C^1(\overline\Om),\ x\in [0,\pi].$$
Then $ \tilde A_0$ defined by
$$\tilde A_0 \eta \m=\m \gamma_1 D\eta \FORALL \eta\in H_3,$$
where $D$ is the Dirichlet map defined in Proposition {\rm
\ref{def-D}}, is a linear bounded map from $H_3$ to $C[0,\pi]$.
Moreover, we have
$$ \tilde A_0 \varphi_k \m=\m k\tanh(k)\varphi_k \FORALL k\in\nline.$$
\end{corollary}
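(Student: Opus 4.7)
My plan is essentially to apply the explicit formulas for $D\eta$ from Lemma~\ref{Deta} and differentiate them term by term.

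First, I would check that $\tilde A_0\eta$ is well defined for $\eta\in H_3$. By Lemma~\ref{Deta} we have $D\eta\in C^2(\overline\Om)\subset C^1(\overline\Om)$ whenever $\eta\in H_3$, so $\gamma_1 D\eta$ makes sense pointwise and lies in $C[0,\pi]$. Linearity of $\tilde A_0$ is inherited from the linearity of $D$ and $\gamma_1$.

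Next, I would compute the spectral formula. Applying $\partial_y$ to the explicit expression \rfb{Dformula} for $D\varphi_k$ and evaluating at $y=0$ gives
\[
   (\gamma_1 D\varphi_k)(x) \m=\m \sqrt{\tfrac{2}{\pi}}\,
   \frac{k\sinh k}{\cosh k}\cos(kx) \m=\m k\tanh(k)\m\varphi_k(x),
\]
which is precisely the claimed identity $\tilde A_0\varphi_k=k\tanh(k)\varphi_k$. For general $\eta\in H_3$, writing $\eta=\sum_k\langle\eta,\varphi_k\rangle\varphi_k$ and differentiating the series \rfb{Detaformula} term by term in $y$ yields, at $y=0$,
\[
   (\tilde A_0\eta)(x) \m=\m \sum_{k\in\nline} k\tanh(k)\m
   \langle\eta,\varphi_k\rangle\m\varphi_k(x).
\]
The term-by-term differentiation is legitimate by the same uniform-convergence argument used at the end of the proof of Lemma~\ref{Deta}: the sequence $k^3|\langle\eta,\varphi_k\rangle|$ is summable when divided by $k^2$ (Cauchy--Schwarz together with $\sum k^{-2}<\infty$ and $\|\eta\|_{H_3}<\infty$), and $|\partial_y(D\varphi_k)(x,0)|\leq k$.

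Finally, for the boundedness from $H_3$ to $C[0,\pi]$, I would estimate the sup norm directly from the series: since $|\tanh k|\leq 1$ and $|\varphi_k(x)|\leq\sqrt{2/\pi}$,
\[
   \|\tilde A_0\eta\|_{C[0,\pi]} \m\leq\m \sqrt{\tfrac{2}{\pi}}
   \sum_{k\in\nline} k\m|\langle\eta,\varphi_k\rangle| \m=\m
   \sqrt{\tfrac{2}{\pi}} \sum_{k\in\nline} \frac{1}{k^2}\cdot k^3
   |\langle\eta,\varphi_k\rangle|.
\]
Applying Cauchy--Schwarz with $\sum_k k^{-4}=\pi^4/90$ gives
\[
   \|\tilde A_0\eta\|_{C[0,\pi]} \m\leq\m \sqrt{\tfrac{2}{\pi}}
   \cdot\frac{\pi^2}{\sqrt{90}}\m\|\eta\|_{H_3},
\]
which is the required bound. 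There is no real obstacle: the only subtle point is justifying the term-by-term evaluation of $\gamma_1$ on the series \rfb{Detaformula}, and this is handled by the $C^2(\overline\Om)$-convergence already established in Lemma~\ref{Deta}.
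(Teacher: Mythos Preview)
Your proposal is correct and follows exactly the route the paper intends: the paper states the corollary as ``a direct consequence of Proposition~\ref{def-D} and of Lemma~\ref{Deta}'' without writing out a proof, and you have simply spelled out those details --- using the $C^2(\overline\Om)$ regularity from Lemma~\ref{Deta} for well-definedness and boundedness, and the explicit formula~\rfb{Dformula} for the eigenvector identity.
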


We are now in a position to define a partial Dirichlet to Neumann map.

\begin{proposition} \label{prop_def_DN}
The operator $\tilde A_0$ introduced in Corollary {\rm
\ref{pe_deoarte}} has a unique continuous extension to an operator
$A_0:H_1\to H$. This extension is strictly positive and
$\Dscr(A_0^\half)=H_\half$. For each $k\in\nline$, we have
$A_0\varphi_k=\l_k\varphi_k$, where
\begin{equation} \label{valpr0}
   \l_k \m=\m k\tanh(k) \FORALL k\in\nline
\end{equation}
and
\begin{equation} \label{forma1serie}
   A_0\eta \m=\m \sum_{k\in\nline} \l_k \langle \eta,\varphi_k
   \rangle\varphi_k \FORALL \eta\in H_1.
\end{equation}
\end{proposition}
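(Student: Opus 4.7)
The approach is to build $A_0$ directly via the spectral data provided by Corollary~\ref{pe_deoarte}, and then verify that this construction matches $\tilde A_0$ on $H_3$. The key observation is that $\lambda_k = k \tanh(k)$ satisfies the two-sided bound
\[
 \tanh(1)\m k \m\leq\m \lambda_k \m\leq\m k \FORALL k\in\nline,
\]
since $\tanh$ is increasing with values in $(0,1)$. In particular, $\lambda_k\asymp k$, which is the source of every subsequent comparison.

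The plan is first to define $A_0:H_1\to H$ by the series formula \rfb{forma1serie}. The upper bound $\lambda_k\leq k$ gives $\|A_0\eta\|_H^2=\sum_k\lambda_k^2|\eta_k|^2\leq\|\eta\|_{H_1}^2$, so $A_0\in\Lscr(H_1,H)$ is well-defined (note that each term $\lambda_k\eta_k\varphi_k$ lies in $H$, so the limit is in $H$). Next, I would check that $A_0$ extends $\tilde A_0$: on any finite linear combination $\eta=\sum_{k=1}^N c_k\varphi_k$, Corollary~\ref{pe_deoarte} and the diagonal action give $\tilde A_0\eta=\sum_{k=1}^N c_k\lambda_k\varphi_k=A_0\eta$. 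Such finite combinations are dense in $H_3$ (the norm on $H_3$ comes from the weighted $\ell^2$ sequence $(k^3\eta_k)$, for which partial sums converge), and both $\tilde A_0:H_3\to C[0,\pi]\hookrightarrow H$ and $A_0:H_1\to H$ are continuous; hence the two operators coincide on $H_3$. Uniqueness of the extension follows from the density of $H_3$ in $H_1$ together with continuity.

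Having defined $A_0$, I would interpret it as an unbounded operator on $H$ with natural domain
\[
 \Dscr(A_0) \m=\m \left\{\eta\in H\ \Big|\ \sum_{k\in\nline}\lambda_k^2|\eta_k|^2<\infty\right\}.
\]
The equivalence $\lambda_k\asymp k$ identifies $\Dscr(A_0)$ with $H_1$. Since $A_0$ is diagonal with respect to the orthonormal basis $(\varphi_k)$ of $H$ with strictly positive eigenvalues $\lambda_k$, it is self-adjoint, and
\[
 \langle A_0\eta,\eta\rangle_H \m=\m \sum_k \lambda_k|\eta_k|^2 \m\geq\m \tanh(1)\sum_k|\eta_k|^2 \m=\m \tanh(1)\|\eta\|_H^2,
\]
which gives the strict positivity with $m_0=\tanh(1)$. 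By the functional calculus for self-adjoint operators, $A_0^\half$ is the diagonal operator with eigenvalues $\sqrt{\lambda_k}$, and
\[
 \Dscr(A_0^\half) \m=\m \left\{\eta\in H\ \Big|\ \sum_{k\in\nline}\lambda_k|\eta_k|^2<\infty\right\}.
\]
Using $\lambda_k\asymp k$ once more, this set coincides with $H_\half$ of \rfb{HALPHA}, with equivalent norms, completing the proof.

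I do not expect a true obstacle here: everything is driven by the explicit spectrum $\lambda_k=k\tanh(k)$. The only point requiring some care is the codomain mismatch between Corollary~\ref{pe_deoarte} (which targets $C[0,\pi]$) and the proposition (which targets $H$); this is handled by observing that each $\varphi_k$ has zero mean, so the action on finite combinations already lands in $H\subset C[0,\pi]\cap L^2[0,\pi]$, after which density closes the argument.
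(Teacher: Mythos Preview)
Your proof is correct and follows essentially the same approach as the paper: both exploit the fact that $\tilde A_0$ is diagonal in the basis $(\varphi_k)$ with eigenvalues $\lambda_k=k\tanh(k)\asymp k$. The paper simply cites the abstract theory of diagonalizable operators from \cite[Propositions~3.2.9 and~3.4.8]{Obs_Book}, whereas you spell out by hand the boundedness, self-adjointness, strict positivity, and the identification $\Dscr(A_0^\half)=H_\half$---but the content is the same.
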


\begin{proof} It is clear from the previous proposition that
$A_0$ fits into the class of diagonalizable operators discussed
in \cite{Obs_Book} in Proposition 3.2.9 and the remarks after it,
and in Proposition 3.4.8 of the same book.
\end{proof}

\begin{proposition} \label{imbarcare}
Let $A_0$ and $D$ be the operators introduced in Propositions
{\rm\ref{prop_def_DN}} and {\rm\ref{def-D}}, respectively. Let
$\gamma\in H_\half$ and $\Psi\in\Hscr^1(\Om)$ be such that
$$\Psi(x,0) \m=\m \gamma(x), \ \mbox{ equality in }\ L^2[0,\pi].$$
Then for every $\eta\in H_\half$ we have $D\eta\in \Hscr^1(\Om)$ and
\begin{equation} \label{mare_de_mare}
   \langle A_0^\half \eta,A_0^\half \gamma\rangle \m=\m \langle
   \nabla (D\eta),\nabla\Psi\rangle_{L^2(\Om)} .
\end{equation}
\end{proposition}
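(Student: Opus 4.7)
The plan is to reduce to the smooth case $\eta\in H_3$, apply Green's first identity, and then extend by density to $\eta\in H_\half$. First, Lemma \ref{pei_su_lemma} already gives that $D\eta\in\Hscr^1(\Om)$ whenever $\eta\in H_\half$, and that the map $\eta\mapsto\nabla(D\eta)$ is bounded from $H_\half$ into $L^2(\Om)^2$. Consequently, both sides of \rfb{mare_de_mare} are continuous sesquilinear forms in $(\eta,\gamma)$ on $H_\half\times H_\half$, so it will be enough to prove the identity for $\eta$ in any dense subset of $H_\half$, and $H_3$ is such a subset (it contains all finite combinations of the $\varphi_k$).

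For $\eta\in H_3$, the last part of Lemma \ref{Deta} gives $D\eta\in C^2(\overline\Om)\subset\Hscr^2(\Om)$, and by Remark \ref{Ephraim_Mirvis} it is the classical solution of the boundary value problem \rfb{LaplaceDeta}. I would now apply the first Green formula to the pair $(D\eta,\Psi)$, which is legitimate since $D\eta\in\Hscr^2(\Om)$, $\Psi\in\Hscr^1(\Om)$ and $\Om$ is Lipschitz:
$$\int_\Om \nabla(D\eta)\cdot\overline{\nabla\Psi}\,\dd x\,\dd y \m=\m -\int_\Om \Delta(D\eta)\,\overline{\Psi}\,\dd x\,\dd y + \int_{\partial\Om} \frac{\partial(D\eta)}{\partial\nu}\,\overline{\Psi}\,\dd\sigma.$$
The bulk term vanishes because $\Delta(D\eta)=0$. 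On each of the three sides forming the bottom and the two lateral edges of $\Om$, the boundary data in \rfb{LaplaceDeta} say that $\partial_\nu(D\eta)=0$, so those pieces of the boundary integral vanish as well. Only the top edge $\{y=0\}$ contributes; there the outward normal is $\partial_y$, and $\Psi(x,0)=\gamma(x)$.

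Hence the right-hand side of Green's identity reduces to \m $\int_0^\pi \partial_y(D\eta)(x,0)\,\overline{\gamma(x)}\,\dd x$. By Corollary \ref{pe_deoarte} together with Proposition \ref{prop_def_DN}, the trace $\partial_y(D\eta)(\cdot,0)$ coincides with $A_0\eta$ for $\eta\in H_3$, so the above integral equals $\langle A_0\eta,\gamma\rangle_H$. Since $A_0$ is strictly positive with $\Dscr(A_0^\half)=H_\half$ (Proposition \ref{prop_def_DN}) and $\eta\in H_3\subset H_1=\Dscr(A_0)$, we have $\langle A_0\eta,\gamma\rangle_H=\langle A_0^\half\eta,A_0^\half\gamma\rangle_H$ for every $\gamma\in H_\half$. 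This establishes \rfb{mare_de_mare} on $H_3\times H_\half$, and the density/continuity argument of the first paragraph extends it to all $\eta\in H_\half$.

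I do not anticipate a real obstacle: the only point that needs a little care is confirming that Green's formula can be applied with only $\Hscr^1$-regularity on $\Psi$, which follows because $D\eta\in\Hscr^2(\Om)$ supplies a well-defined Neumann trace and the $\Hscr^1$-trace of $\Psi$ is square-integrable on $\partial\Om$. As an alternative (useful if one wishes to avoid the polygonal trace arguments altogether), one can first split $\Psi=D\gamma+(\Psi-D\gamma)$, use $(D\gamma)(\cdot,0)=\gamma$ from \rfb{urma_buna} and \rfb{parti_bune} applied to $\Psi-D\gamma\in\Hscr^1_{top}(\Om)$ to reduce to the symmetric case $\Psi=D\gamma$, and then directly evaluate $\langle\nabla(D\eta),\nabla(D\gamma)\rangle_{L^2(\Om)}$ term-by-term from the series \rfb{Detaformula}; the identity $\cosh^2+\sinh^2=\cosh(2\cdot)$ together with $\sinh(2k)=2\sinh(k)\cosh(k)$ produces exactly the eigenvalues $\l_k=k\tanh k$, giving the claimed identity.
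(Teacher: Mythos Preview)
Your proof is correct and follows essentially the same approach as the paper's own proof: reduce to $\eta\in H_3$ using Lemma \ref{Deta}, apply integration by parts (Green's formula) together with the boundary identities from \rfb{LaplaceDeta} and the identification $\partial_y(D\eta)(\cdot,0)=A_0\eta$ from Corollary \ref{pe_deoarte}/Proposition \ref{prop_def_DN}, and then extend to $\eta\in H_\half$ by the density of $H_3$ and the continuity furnished by Lemma \ref{pei_su_lemma}. Your write-up simply spells out in more detail what the paper compresses into a couple of sentences.
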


\begin{proof}
First we assume that $\eta\in H_3$, so that according to Lemma
\ref{Deta} we have $D\eta\in C^2(\overline\Om)$. Then
\rfb{mare_de_mare} follows by a simple integration by parts and
Proposition \ref{prop_def_DN}. The fact that $D\eta\in \Hscr^1(\Om)$
for every $\eta\in H_\half$ has already been proved in Lemma
\ref{pei_su_lemma}. Finally, to prove that \rfb{mare_de_mare} still
holds for $\eta\in H_\half$ it suffices to use the density of $H_3$
in $H_\half$, combined with Lemma \ref{pei_su_lemma}. \end{proof}

\begin{corollary} \label{NN-pre}
With $\gamma_1$ as in Corollary {\rm\ref{pe_deoarte}}, define
the operator $\tilde B_1$ by
$$\tilde B_1 v \m=\m \gamma_1 N v \ \FORALL v\in\Uscr_2,$$
where $N$ is the Neumann map introduced in Proposition {\rm
\ref{def-N}}. Then $\tilde B_1$ is a bounded linear operator from
$\Uscr_2$ to $C[0,\pi]$. Moreover, we have
\begin{equation} \label{Hanukka}
   \left(\tilde B_1 \psi_k\right)(x) \m=\m \frac{(-1)^k
   \sqrt{2}}{\sinh\left[(2k-1)\frac{\pi^2}{2}\right]}\cosh\left[
   (2k-1)\frac{\pi}{2} (x-\pi)\right],
\end{equation}
for all $k\in\nline$, $x\in[0,\pi]$, where the functions $\psi_k$
have been defined in \rfb{DEFPSIK}.
\end{corollary}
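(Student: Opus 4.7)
The statement is essentially a direct consequence of Lemma~\ref{Nv}, so my plan is mostly bookkeeping; I would proceed in three steps.

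First, well-definedness and linearity. Lemma~\ref{Nv} gives $Nv\in C^2(\overline{\Om})$ for every $v\in\Uscr_2$, so $\gamma_1 Nv = \frac{\partial(Nv)}{\partial y}(\cdot,0)$ lies in $C[0,\pi]$, and linearity of $\tilde B_1$ is inherited from the linearity of $N$ and of $\gamma_1$.

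Second, derivation of the explicit formula \rfb{Hanukka}. I would differentiate \rfb{Gail_skiing} in $y$ and set $y=0$. The only factor affected is $\psi_k(y)$, whose derivative at $0$ is
$$\psi_k'(0) \m=\m -\sqrt 2\cdot\frac{(2k-1)\pi}{2}\sin\left[(2k-1)\frac{\pi}{2}\right].$$
The trigonometric identity $\sin\left[(2k-1)\frac{\pi}{2}\right] = (-1)^{k-1}$ then yields the sign $(-1)^k$ in \rfb{Hanukka}, and the factor $(2k-1)\pi$ cancels against the denominator in \rfb{Gail_skiing}; the remaining factors line up exactly to give \rfb{Hanukka}.

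Third, boundedness from $\Uscr_2$ to $C[0,\pi]$. The $C^2(\overline\Om)$-convergence of $Nv = \sum_{k\in\nline} \langle v,\psi_k\rangle N\psi_k$ established inside the proof of Lemma~\ref{Nv} together with the continuity of $\gamma_1:C^1(\overline\Om)\to C[0,\pi]$ allow me to pass $\gamma_1$ inside the sum, so $\tilde B_1 v = \sum_{k\in\nline} \langle v,\psi_k\rangle\tilde B_1\psi_k$ with convergence in $C[0,\pi]$. From \rfb{Hanukka} one reads off $\|\tilde B_1\psi_k\|_{C[0,\pi]} = \sqrt 2\coth\left[(2k-1)\frac{\pi^2}{2}\right]$, which is uniformly bounded by $M:=\sqrt 2\coth\!\left(\frac{\pi^2}{2}\right)$. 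Cauchy--Schwarz against the convergent series $\sum_{k\in\nline}(2k-1)^{-4}$ then gives
$$\|\tilde B_1 v\|_{C[0,\pi]} \m\leq\m M\sum_{k\in\nline}|\langle v,\psi_k\rangle| \m\leq\m M\left(\sum_{k\in\nline}\frac{1}{(2k-1)^4}\right)^{1/2}\|v\|_{\Uscr_2}.$$
I do not anticipate any substantive obstacle: the heavy lifting (the explicit series, its $C^2$ convergence, and the basic properties of $N$) has all been done inside Lemma~\ref{Nv}, and the only nontrivial step here is the bookkeeping of trigonometric signs that produce the $(-1)^k$ in \rfb{Hanukka}.
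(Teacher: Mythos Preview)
Your proposal is correct and follows exactly the approach the paper intends: the paper's proof is the single sentence ``This follows from Lemma~\ref{Nv} and the formula \rfb{Gail_skiing} for $N\psi_k$,'' and your three steps simply unpack that sentence in full detail, including the trigonometric bookkeeping that produces the $(-1)^k$ and the Cauchy--Schwarz bound for continuity into $C[0,\pi]$.
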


\begin{proof}
This follows from Lemma \ref{Nv} and the formula \rfb{Gail_skiing}
for $N\psi_k$.
\end{proof}

We are now ready to define a Neumann to Neumann map.

\begin{theorem} \label{NN-map}
The operator $\tilde{B}_1$ introduced in Corollary {\rm\ref{NN-pre}}
can be extended in a unique manner to a linear bounded operator $B_1:
L^2[-1,0]\to L^2[0,\pi]$. Moreover, for every $v\in L^2[-1,0]$ with
$\int_{-1}^0 v(y)\,\dd y=0$ we have that $B_1 v\in H$, where $H$ is
defined in \rfb{H}. Finally,
\begin{multline} \label{idioata}
   \int_0^\pi (B_1 v)(x)\, \overline{\Psi(x,0)} \m\dd x \m=\m
   \int_\Om \nabla(Nv)(x,y)\cdot \overline{\nabla\Psi(x,y)}\, \m\dd
   x \m\dd y\\ -\int_{-1}^0 v(y) \overline{\Psi(0,y)}\, \dd y
   \FORALL \Psi\in\Hscr^1(\Om).
\end{multline}
\end{theorem}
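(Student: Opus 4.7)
My approach is to first derive the identity \rfb{idioata} for smooth inputs $v\in\Uscr_2$ via Green's first formula, then use that identity with two cleverly chosen test functions $\Psi$ to estimate $\tilde B_1 v$ in the orthogonal decomposition $L^2[0,\pi]=H\oplus\cline\cdot\mathbf{1}$, and finally extend everything to $L^2[-1,0]$ by density. For the first step, if $v\in\Uscr_2$ then Lemma \ref{Nv} together with Remark \ref{nicipedeparte} give that $Nv\in C^2(\overline\Om)$ is the classical solution of \rfb{Deltaf} (one uses that $\Uscr_2\subset C[-1,0]$). Applying Green's first formula to $Nv$ and $\overline\Psi$ for arbitrary $\Psi\in\Hscr^1(\Om)$, using $\Delta(Nv)=0$ and the homogeneous Neumann conditions on the bottom and right edges, yields the desired identity once one keeps track of the outward normal derivatives on the other two edges: on the top $\partial_\nu(Nv)=\partial_y(Nv)|_{y=0}=\tilde B_1 v$, while on the left $\partial_\nu(Nv)=-\partial_x(Nv)|_{x=0}=v$.

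For the crucial boundedness estimate, first test the identity with $\Psi\equiv 1\in\Hscr^1(\Om)$; this kills the gradient term and yields
\begin{equation*}
   \int_0^\pi (\tilde B_1 v)(x)\,\dd x \m=\m -\int_{-1}^0 v(y)\,\dd y,
\end{equation*}
which controls the mean component of $\tilde B_1 v$ by $\|v\|_{L^2[-1,0]}$. Next, test with $\Psi=D\eta$ for $\eta\in H_3$; since $Nv\in\Hscr^1_{top}(\Om)$, the orthogonality relation \rfb{parti_bune} of Lemma \ref{pei_su_lemma} (after complex conjugation) forces $\int_\Om \nabla(Nv)\cdot\overline{\nabla(D\eta)}\,\dd x\dd y=0$, so the identity collapses to
\begin{equation*}
   \langle \tilde B_1 v,\eta\rangle_{L^2[0,\pi]} \m=\m -\langle v,C_0\eta\rangle_{L^2[-1,0]} \m=\m -\langle C_0^* v,\eta\rangle_H,
\end{equation*}
where $C_0$ is the bounded operator of Corollary \ref{DDmap}. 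By density of $H_3$ in $H$, this identifies the $H$-component of $\tilde B_1 v$ with $-C_0^* v$, whose $H$-norm is bounded by $\|C_0^*\|\cdot\|v\|_{L^2[-1,0]}$. Adding the mean and zero-mean contributions gives the desired estimate $\|\tilde B_1 v\|_{L^2[0,\pi]}\leq C\|v\|_{L^2[-1,0]}$ for every $v\in\Uscr_2$.

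Density of $\Uscr_2$ in $L^2[-1,0]$ then furnishes the unique bounded extension $B_1:L^2[-1,0]\to L^2[0,\pi]$, and the identity \rfb{idioata} passes to the limit for arbitrary $v\in L^2[-1,0]$ because each of its terms is continuous in $v$: the left-hand side by boundedness of $B_1$, the first right-hand term by Proposition \ref{def-N} (continuity of $N:L^2[-1,0]\to\Hscr^1_{top}(\Om)$), and the last term directly, since the trace $\Psi(0,\cdot)$ lies in $L^2[-1,0]$. The zero-mean assertion is then immediate: specialising \rfb{idioata} to $\Psi\equiv 1$ for general $v\in L^2[-1,0]$ gives $\int_0^\pi (B_1 v)(x)\,\dd x=-\int_{-1}^0 v(y)\,\dd y$, which vanishes under the hypothesis, so $B_1 v\in H$. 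I do not expect any genuine obstacle; the only points requiring care are the sign bookkeeping for the four outward normals in Step 1 and recognising that Lemma \ref{pei_su_lemma} is exactly the tool that makes the gradient term vanish against $D\eta$ in Step 2.
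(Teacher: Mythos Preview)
Your proof is correct, but the route differs substantially from the paper's. The paper establishes the $L^2$-bound on $\tilde B_1$ \emph{first}, by direct analysis of the series \rfb{B_0vformula}: it splits $\tilde B_1 v$ as $(f+g)/2$ where $f$ collects the exponentially small pieces (trivially bounded by Cauchy--Schwarz) and $g$ collects the dangerous pieces; the $L^2$-norm of $g$ is then controlled via Hilbert's inequality (or, as noted in a remark, via the Carleson measure criterion for admissibility). Only after this does the paper derive \rfb{idioata} for $v\in\Uscr_2$ by integration by parts and extend by density, finally obtaining $B_1 v\in H$ by testing with $\Psi=1$.

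You reverse the logic: you obtain \rfb{idioata} first on $\Uscr_2$, then exploit it with the choice $\Psi=D\eta$ together with the orthogonality \rfb{parti_bune} to identify the zero-mean part of $\tilde B_1 v$ as $-C_0^* v$. This is a genuine simplification, since the boundedness of $C_0$ was already established in Corollary~\ref{DDmap} by a completely elementary series estimate, so you bypass Hilbert's inequality altogether. Your argument also makes transparent the duality $B_1\big|_{\text{mean-zero}}=-C_0^*$, which is implicit but not stated in the paper. The paper's approach, on the other hand, is self-contained at the series level and does not rely on having previously set up the operator $C_0$ or the orthogonality relation \rfb{parti_bune}; it could in principle be read independently of Section~\ref{sec4}.
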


\begin{proof}
For any $v\in L^2[-1,0]$ we set
$$ b_k \m=\m \frac{(-1)^k\sqrt{2}}{\sinh\left[(2k-1)\frac
   {\pi^2}{2}\right]}, \qquad v_k \m=\m \langle v,\psi_k \rangle,$$
and notice that these sequences are in $l^2$ and $\|(v_k)\|_{l^2}=
\|v\|_{L^2[-1,0]}$. From \rfb{Hanukka} it follows that if $v\in
\Uscr_2$ then for every $x\in[0,\pi]$ we have
\begin{equation} \label{B_0vformula}
   (\tilde{B}_1v)(x) \m=\m \sum_{k\in\nline} b_k v_k \cosh\left[
   (2k-1) \frac{\pi}{2}(x-\pi)\right] \m=\m \frac{f(x)+g(x)}{2},
\end{equation}
where
$$ f(x) \m=\m \sum_{k\in\nline} b_k v_k \exp\left[(2k-1)\frac{\pi}
   {2}(x-\pi)\right],$$
\begin{equation} \label{DEFGX}
   g(x) \m=\m \sum_{k\in\nline} b_k v_k \exp\left[(2k-1)\frac{\pi}
   {2}(\pi-x)\right].
\end{equation}
On one hand, from \m $0\leq\exp\left[(2k-1)\frac{\pi}{2}(x-\pi)
\right]\leq 1$ for all $x\in[0,\pi]$, by using Cauchy-Schwarz we 
obtain that there exists $C_1>0$ such that
\begin{equation} \label{jumate_buna}
   \int_0^\pi |f(x)|^2\, \dd x \m\leq\m C_1 \|v\|_{L^2[-1,0]}^2
   \FORALL v\in \Uscr_2 \m.
\end{equation}
On the other hand, from \rfb{DEFGX} it follows that
$$ \int_0^\infty |g(x)|^2\, \m\dd x \m=\m \frac{1}{\pi} \sum_{k,l\in
   \nline} \frac{c_k\overline{c_l} v_k \overline{v_l}}{k+l-1},$$
where \m $c_k=b_k\exp\left[(2k-1)\frac{\pi^2}{2}\right]$ for all
$k\in\nline$.
Using that $|c_k|\leq|c_1|< \sqrt{10}$ for all $k\in\nline$,
together with Hilbert's inequality, see for instance \cite[Chapter
IX]{MR0046395} or the nice survey \cite{Jameson}, we obtain that
\begin{equation} \label{jumate_rea_buna}
   \int_0^\infty |g(x)|^2\, \dd x \m\leq\m 10\,\sum_{k\in\nline}
   |v_k|^2 \FORALL v\in\Uscr_2.
\end{equation}
Putting together \rfb{B_0vformula}, \rfb{jumate_buna} and
\rfb{jumate_rea_buna}, it follows that there exists $C>0$ such that
$$ \lVert \tilde{B}_1v\rVert_{L^2[0,\pi]}^2 \m\leq\m C\lVert v
   \rVert_{L^2[-1,0]}^2 \FORALL v\in\Uscr_2.$$
The above estimate, combined with the density of $\Uscr_2$ in
$L^2[-1,0]$, implies that indeed $\tilde B_1$ admits an unique
extension $B_1\in\Lscr\left(L^2[-1,0],L^2[0,\pi]\right)$.

Assume again that $v\in\Uscr_2$. Then, according to Remark
\ref{nicipedeparte} and to Lemma \ref{Nv} we have that $f=N v$ is a
classical solution of \rfb{Deltaf}, so that for every $v\in\Uscr_2$
we have
$$ \begin{aligned} 0 \m&=\m \int_\Om \Delta(Nv)(x,y) \overline{\Psi
   (x,y)}\m\dd x\m\dd y \m\\ &=\m \int_{-1}^0 v(y)\overline{\Psi
   (0,y)}\m\dd y + \int_0^\pi (B_1v)(x)\overline{\Psi(x,0)}\m\dd x
   -\int_\Om \nabla(Nv)\cdot \overline{\nabla\Psi} \m\dd x \m\dd y.
   \end{aligned}$$
Thus \rfb{idioata} holds for $v\in\Uscr_2$ and by density for $v\in
L^2[-1,0]$. Using that $\int_{-1}^0v(y)\dd y=0$ and taking $\Psi=1$
in \rfb{idioata} we obtain that $B_1v$ indeed satisfies the
condition $\int_0^\pi(B_1 v)(x)\dd x=0$, which implies that
$B_1v\in H$. \end{proof}

\begin{remark} {\rm
An alternative proof of \rfb{jumate_rea_buna} can be given using the
Carleson measure criterion for admissibility, see for instance
\cite[Sect.~5.3]{Obs_Book}. To this aim, consider the Hilbert space
$\tilde X=l^2$, the strictly negative operator $\tilde A={\rm diag}\,
\left(-\frac{(2k-1)\pi}{2}\right)$ and the observation functional
$\tilde C=\bbm{c_1 & c_2 & c_3\ldots}$. Then according to the
aforementioned criterion, $\tilde C$ is an admissible observation operator
for the operator semigroup generated by $\tilde A$, and
\rfb{jumate_rea_buna} follows.}
\end{remark}

The above theorem clearly implies the following result:

\begin{corollary} \label{tildeB_0}
Let $h\in L^2[-1,0]$, with $\int_{-1}^0 h(y)\,\dd y=0$ and let $B_0$
be the operator defined by
$$ B_0 u \m=\m u B_1h \FORALL u\in\cline \m.$$
Then $B_0\in\Lscr(\cline,H)$. Moreover, we have
\begin{equation} \label{adj_con_bis}
   \int_0^\pi (B_0 u)(x) \overline{\Psi(x,0)} \m\dd x =\m u
   \int_\Om \nabla(Nh)\cdot \overline{\nabla\Psi} \m\dd x \m \dd y
   -u \int_{-1}^0 h(y) \overline{\Psi(0,y)}\, \dd y \m,
\end{equation}
for all $u\in\cline$ and $\Psi\in\Hscr^1(\Om)$. In particular,
\begin{equation} \label{adj_con}
   B_0^* \eta \m=\m -\int_{-1}^0 h(y) \overline{(C_0\eta)(y)}\,
   \dd y \FORALL \eta\in H,
\end{equation}
where $C_0=\gamma_0 D$ is the operator introduced in Corollary
{\rm\ref{DDmap}}.
\end{corollary}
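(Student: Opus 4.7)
The first two assertions are essentially formal. The claim $B_0\in\Lscr(\cline,H)$ will follow at once from Theorem~\ref{NN-map}, since the zero-mean assumption on $h$ guarantees $B_1h\in H$, and $u\mapsto u\,B_1h$ is manifestly bounded from $\cline$ to $H$. The identity \rfb{adj_con_bis} is nothing but \rfb{idioata} evaluated at $v=h$ and multiplied by $u$.

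The substantive step is the adjoint formula \rfb{adj_con}. My plan is to establish it first on the dense subspace $H_\half\subset H$ and then extend by continuity. I would fix $\eta\in H_\half$ and use the Dirichlet extension $\Psi:=D\eta\in\Hscr^1(\Om)$ provided by Lemma~\ref{pei_su_lemma}; this $\Psi$ carries two useful boundary traces, namely $\Psi(\cdot,0)=\eta$ by \rfb{urma_buna}, and $\Psi(0,\cdot)=C_0\eta$ by Corollary~\ref{DDmap}. Substituted into \rfb{adj_con_bis}, the left-hand side becomes $\langle B_0u,\eta\rangle_H$, which via the defining relation $\langle B_0u,\eta\rangle_H=\langle u,B_0^*\eta\rangle_\cline$ identifies $B_0^*\eta$, while the last boundary term on the right becomes exactly $-u\int_{-1}^0 h(y)\,\overline{(C_0\eta)(y)}\,\dd y$, already of the form demanded by \rfb{adj_con}.

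The only nontrivial point is to verify that the remaining middle integral $\int_\Om \nabla(Nh)\cdot\overline{\nabla D\eta}\,\dd x\,\dd y$ vanishes. For this I would invoke \rfb{parti_bune} with the test function $\Psi:=Nh$, which is admissible because Proposition~\ref{def-N} asserts $Nh\in\Hscr^1_{top}(\Om)$: that identity yields $\int_\Om \nabla(D\eta)\cdot\overline{\nabla(Nh)}\,\dd x\,\dd y=0$, whose complex conjugate is precisely the integral to be killed. Combining these observations will give \rfb{adj_con} on $H_\half$, and a density argument will then extend it to all of $H$, using that the right-hand side depends continuously on $\eta\in H$ thanks to $C_0\in\Lscr(H,L^2[-1,0])$ from Corollary~\ref{DDmap}, while the left-hand side is automatically so. The argument is mostly bookkeeping; the one conceptual observation is the dual use of the orthogonality relation \rfb{parti_bune}, with $D\eta$ and $Nh$ taking turns in its two factors, and no step genuinely stands out as an obstacle.
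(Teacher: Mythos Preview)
Your proposal is correct and follows essentially the same route as the paper: apply Theorem~\ref{NN-map} with $v=uh$ for the first two claims, then choose $\Psi=D\eta$ with $\eta\in H_\half$ in \rfb{adj_con_bis} and kill the gradient term via \rfb{parti_bune} with $Nh\in\Hscr^1_{top}(\Om)$. Your treatment is in fact slightly more careful than the paper's, since you make explicit both the complex conjugation needed to match \rfb{parti_bune} to the required integral and the density extension from $H_\half$ to $H$.
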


\begin{proof}
The fact that $B_0\in\Lscr(\cline,H)$ and \rfb{adj_con_bis} follows
from Theorem \ref{NN-map} (in particular from \rfb{idioata}) with
$v=uh$. Moreover, taking $\Psi=D\eta$ with $\eta\in H_\half$ (see
Lemma \ref{pei_su_lemma}) in \rfb{adj_con_bis}, we see that for every
$u\in\cline$,
$$ \langle B_0 u,\eta\rangle = u\langle B_1 h,\eta\rangle =
   -u \int_{-1}^0 h(y) \overline{(C_0\eta)(y)}\, \dd y + u\nm\int_\Om
   \nabla(Nh) \cdot \overline{\nabla(D\eta)} \dd x\m \dd y .$$
Using \eqref{parti_bune} it follows that the last term in the
right-hand side of the above equation is zero, so that we obtain
\rfb{adj_con}. \end{proof}

\section{Proof of the main results} \label{sem_set} 

Throughout this section we denote by $X$ the Hilbert space
$H_\half\times H$, where $H$ and $(H_\alpha)_{\alpha>0}$ have been
defined in \rfb{H} and \rfb{HALPHA}, respectively. We also introduce
the linear operator $A:\Dscr(A)\to X$ with \ $\Dscr(A)=H_1\times
H_\half$ \ and
\begin{equation} \label{DEFAB2}
   A \m=\m \bbm{0 & I\\ -A_0 & 0}, \ \ \mbox{ i.e., } \ \ A \bbm{
   \varphi\\ \psi} = \bbm{ \psi\\ -A_0\varphi} \FORALL
   \bbm{ \varphi\\ \psi}\in\Dscr(A),
\end{equation}
where $A_0=\gamma_1 D$ is the strictly positive operator on $H$, with
domain $H_1$, which has been introduced in Proposition
\ref{prop_def_DN}. We redefine the inner product on $H_\half$ as 
\vspace{-2mm}
$$ \langle x,z \rangle_\half \m=\m \left\langle A_0^\half x,
   A_0^\half z \right\rangle \m,$$
which is equivalent to the original inner product on $H_\half$. Then
$A$ is skew-adjoint on $X$ (see, for instance, \cite[Proposition
3.7.6]{Obs_Book}), so that, according to Stone's theorem (see, for 
instance \cite[Section 3.7]{Obs_Book}), $A$ generates a group 
$\tline=(\tline_t)_{t\in\rline}$ of unitary operators on $X$. 
Moreover, we recall that $h\in L^2[-1,0]$, with $\int_{-1}^0 h(y)\, 
\dd y=0$ and that we have introduced the input space $U=\cline$. Let 
$B\in\Lscr(U,X)$ be given by \vspace{-3mm}
\begin{equation} \label{defbsingur}
   B \m=\m \begin{bmatrix} 0\\ B_0\end{bmatrix},
\end{equation}
where $B_0\in\Lscr(U,H)$ is as in Corollary \ref{tildeB_0}. Clearly
$B\in\Lscr(U,X)$.

We are now in a position to prove our main well-posedness result:

\begin{proof}[Proof of Theorem {\rm\ref{give_context}}]
With the above notation for $X,\ A,\ U$ and $B$ we consider, for each
$\tau\geqslant 0$, the map $\Phi_\tau$ defined by \rfb{Erdogan}, which
is clearly linear and bounded from $\LE U$ into $X$. Let $z_0=
\sbm{\zeta_0\\ w_0}\in X$, let $u\in L^2_{loc}([0,\infty);U)$ and define $z(t)=\sbm{
\zeta\\ w}\in C([0,\infty);X))$ by \rfb{pancakes}. Then, according to
a classical result (see, for instance, \cite[Remark 4.1.2]{Obs_Book}),
for every $t\geq 0$ and $\psi\in \Dscr(A)$ we have
$$ \langle z(t)-z_0,\psi\rangle_X \m=\m \int_0^t \left[-\langle
   z(\sigma),A\psi\rangle_X + \langle B u(\sigma),\psi\rangle_X
   \right] \dd\sigma \m.$$
Setting $\psi=\sbm{\psi_1\\ \psi_2}$, with $\psi_1\in H_1$ and $\psi_2
\in H_\half$ and using the specific structure \rfb{DEFAB2},
\rfb{defbsingur} of $A$ and $B$, the last formula implies that
\begin{multline} \label{variationala_mare}
   \langle A_0^\half (\zeta(t)-\zeta_0),A_0^\half \psi_1\rangle+
   \langle w(t)-w_0,\psi_2\rangle \m=\m -\left\langle \int_0^t
   A_0^\half \zeta(\sigma) \m\dd\sigma, A_0^\half \psi_2
   \right\rangle\\ + \left\langle \int_0^t w(\sigma)\m \dd\sigma,
   A_0\psi_1\right\rangle + \int_0^t \langle B_0 u(\sigma),\psi_2
   \rangle \m \dd\sigma,
\end{multline}
for every $t\geq 0,\ \psi_1\in H_1,\ \psi_2\in H_\half$. The above
formula holds, in particular, for $\psi_2=0$ and arbitrary $\psi_1
\in H_1$, which yields that
$$ \zeta(t)-\zeta_0 \m=\m \int_0^t w(\sigma)\m \dd\sigma \FORALL
   t\geq 0,$$
so that \m $w(t)=\dot\zeta(t)$, for all $t\geq 0$. Inserting the last
two formulas in \rfb{variationala_mare}, we obtain that
\vspace{-2mm}
\begin{multline} \label{variationala_mare_bis}
   \langle \dot\zeta(t)-w_0,\psi_2\rangle \m=\m -\int_0^t
   \langle  A_0^\half \zeta(\sigma), A_0^\half \psi_2 \rangle
   \m\dd\sigma + \int_0^t\langle B_0 u(\sigma), \psi_2\rangle \m
   \dd\sigma,
\end{multline}
where $t\geq 0,\ w_0=\dot{\zeta}(0)$ and $\psi_2\in H_\half$. (This
formula \rfb{variationala_mare_bis} is the weak form of the equation
$\ddot\zeta=-A_0\zeta+B_0 u$.) Let $\Psi\in\Hscr^1(\Om)$ be such that
$\int_0^\pi\Psi(x,0)\, \dd x=0$ and then $\psi_2(x)=\Psi(x,0)$ is a
function in $H_\half$.  By combining \rfb{variationala_mare_bis} and
\rfb{adj_con_bis} it follows that \vspace{-2mm}
\begin{multline} \label{variationala_mare_bis_bis}
   \langle \dot\zeta(t)-w_0,\psi_2\rangle \m=\m -\int_0^t
   \langle  A_0^\half \zeta(\sigma), A_0^\half \psi_2 \rangle
   \m\dd\sigma \\ + \int_0^t u(\sigma)\, \int_\Om \nabla(Nh)\cdot
   \overline{\nabla\Psi} \m\dd x \m\dd y \m\dd\sigma - \int_0^t
   u(\sigma)\int_{-1}^0 h(y) \overline{\Psi(0,y)} \m\dd y\m\dd\sigma.
\end{multline}
On the other hand, from Proposition \ref{imbarcare} it follows that
$D\zeta(\sigma)\in\Hscr^1(\Om)$ and
$$ \langle  A_0^\half \zeta(\sigma), A_0^\half \psi_2 \rangle \m=\m
   \langle \nabla (D\zeta(\sigma)),\nabla\Psi\rangle \FORALL
   \Psi\in \Hscr^1(\Om),\ \psi_2(x)=\Psi(x,0) \m.$$
The above formula, when combined with \rfb{variationala_mare_bis_bis},
and setting
\begin{equation} \label{Dan_Raviv}
   \phi(t,\cdot,\cdot) \m=\m -[D\zeta(t)](\cdot,\cdot) + u(t)(Nh) 
   (\cdot,\cdot) \FORALL t\geq 0,
\end{equation}
implies that $\phi(t,\cdot, \cdot)\in\Hscr^1(\Om)$ and $(\phi,\zeta)$
satisfies \rfb{weaksol} for every $\Psi\in\Hscr^1(\Om)$ with
$\int_0^\pi\Psi(x,0)\m\dd x=0$. On the other hand, $(\phi,\zeta)$
obviously satisfies \rfb{weaksol} if $\Psi$ is a constant function,
thus $(\phi,\zeta)$ satisfies \rfb{weaksol} for every $\Psi\in\Hscr^1
(\Om)$. Moreover, according to Lemma \ref{pei_su_lemma}, Proposition
\ref{def-N} and the above definition of $\phi$, we have that $\phi\in
L^2_{loc}([0,\infty),\Hscr^1(\Om))$ and \rfb{Jeremy_C} holds, so that
$(\phi,\zeta)$ is a solution of \rfb{gravity-1} in the sense of
Definition \ref{def_solutii}.

Conversely, assume that $(\phi,\zeta)$ is a solution of
\rfb{gravity-1} in the sense of Definition \ref{def_solutii}, with
$\zeta(0)=\zeta_0\in H_\half$ and $\dot\zeta(0)=w_0\in H$. Using
the fact that \rfb{weaksol} holds, in particular, for $\Psi\in
\Hscr^1_{top}(\Om)$ it follows that for every $t\geq 0$ and every
$\Psi\in\Hscr^1_{top}(\Om)$ we have \vspace{-2mm}
$$ \int_\Om \nabla\phi(t,x,y) \cdot \overline{\nabla\Psi(x,y)}\,\dd x
   \m\dd y -u(t)\int_{-1}^0 h(y) \overline{\Psi(0,y)} \,\dd y
   \m=\m 0.$$
Using the notation \vspace{-4mm}
\begin{equation} \label{denottilde}
   \tilde \phi(t,\cdot,\cdot) \m=\m \phi(t,\cdot,\cdot)-u(t) (Nh)
   (\cdot,\cdot),
\end{equation}
where $N$ is the Neumann map defined in Proposition \ref{def-N}, it
follows that
$$ \int_\Om \nabla\tilde\phi(t,x,y) \cdot \overline{\nabla\Psi(x,y)}
   \,\dd x\,\dd y  \m=\m 0 \FORALL \Psi\in\Hscr^1_{top}(\Om).$$

The last formula holds, in particular, for $\Psi\in\Dscr(A_1)$, where
$\Dscr(A_1)$ has been defined in Proposition \ref{AZERO0}, so that an
integration by parts yields that
\begin{equation} \label{will_imply}
   \int_\Om \tilde\phi(t,x,y) \overline{\Delta\Psi(x,y)} \m\dd x
   \m\dd y \m= \int_0^\pi \tilde\phi(t,x,0) \overline{\frac{\partial
   \Psi}{\partial y}(x,0)} \m\dd x\ \ \forall\m \Psi\in\Dscr(A_1).
\end{equation}
Moreover, according to Definition \ref{def_solutii} and Proposition
\ref{def-N}, we have \rfb{Jeremy_C} and \m $(Nh)(x,0)=0$ for
$x\in[0,\pi]$, so that from \rfb{will_imply} it follows that
$$ \int_\Om \tilde\phi(t,x,y) \overline{\Delta\Psi(x,y)} \m\dd x
   \m\dd y \m=\m -\int_0^\pi \zeta(t,x) \overline{\frac{\partial\Psi}
   {\partial y}(x,0)} \m\dd x \FORALL \Psi\in\Dscr(A_1).$$
Comparing the above formula with the definition \rfb{taking} of the
Dirichlet map, with $g=\overline{\Delta\Psi}=-\overline{A_1\Psi}$, 
and recalling that $A_1$ is onto, it follows that
$$ \tilde\phi(t,\cdot,\cdot) \m=\m -[D\zeta(t)](\cdot,\cdot) \FORALL 
   t\geq 0.$$
The last formula and \rfb{denottilde} yield that again 
\rfb{Dan_Raviv} holds.

Now we take $\psi_2\in H_\half$ and we recall from Lemma 
\ref{pei_su_lemma} that $D\psi_2\in\Hscr^1(\Om)$ and that $(D\psi_2)
(x,0)=\psi_2(x)$ for $x\in[0,\pi]$. We can thus choose $\Psi=D\psi_2$
in \rfb{weaksol} and using Proposition \ref{imbarcare} and Corollary
\ref{tildeB_0}, it follows that $\zeta$ satisfies 
\rfb{variationala_mare_bis}. This easily implies that $z=\sbm{\zeta\\ 
\dot\zeta}$ satisfies \rfb{pancakes}.
\end{proof}

In order to prove Theorem \ref{th_mergebine}, we
need the following preliminary result on the eigenvalues and the
eigenvectors of the operator $A$ introduced at the beginning of this
section.

\begin{lemma} \label{lema_spectrala}
Let $(\l_k)_{k\in\nline}$ and $(\varphi_k)_{k\in\nline}$ be the
sequences defined in \rfb{valpr0} and \rfb{mare_definitie},
respectively. We extend the sequences $\mu_k=(\sqrt{\l_k})_{k\in
\nline}$ and $(\varphi_k)_{k\in\nline}$ to $\zline^*$ by setting
\vspace{-4mm}
$$ \m\ \ \mu_{-k} \m=\m -\mu_k, \qquad \varphi_{-k} \m=\m -\varphi_k
   \FORALL k\in\nline.$$

Then the family $\{\phi_k\}_{k\in\zline^*}$ defined by
\begin{equation} \label{Turkey}
   \phi_k \m=\m \frac{1}{\sqrt{2}}\begin{bmatrix} \frac{1}{{\rm i}\mu_k}
   \varphi_k \\ \varphi_k \end{bmatrix} \FORALL k\in\zline^*
\end{equation}
is an orthonormal basis in $X$ formed of eigenvectors of the operator
$A$ defined in \rfb{DEFAB2}. Moreover, for each $k\in\zline^*$, \m
$A\phi_k={\rm i} \mu_k\phi_k$. Finally, there exists $\e>0$ such that for every $\o\in\rline$ with $|\o|\geq 1$, the interval $\left[\o-\frac{
\e}{|\o|},\o+\frac{\e}{|\o|}\right]$ contains at most one element of
the sequence $(\mu_k)_{k\in\zline^*}$.
\end{lemma}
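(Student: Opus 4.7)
The plan has three pieces, one for each assertion: verify the eigenvalue relation, prove that $\{\phi_k\}_{k\in\zline^*}$ is an orthonormal basis in $X$, and establish the local rarefaction of the sequence $(\mu_k)$.

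For the eigenvalue identity, the key input is Proposition~\ref{prop_def_DN}: $A_0\varphi_k = \mu_k^2\varphi_k$ for every $k\in\nline$, which via the antisymmetric extension $(\varphi_{-k},\mu_{-k}) = (-\varphi_k,-\mu_k)$ remains valid for all $k\in\zline^*$. Inserting $\phi_k$ into the matrix \rfb{DEFAB2} and using $\mu_k^2/(i\mu_k) = -i\mu_k$ gives directly $A\phi_k = i\mu_k\phi_k$. For the basis property, I would split orthonormality from totality. With the redefined inner product on $H_\half$ (so that $A_0^\half\varphi_k = |\mu_k|\varphi_k$), a direct scalar computation yields $\langle\phi_k,\phi_j\rangle_X = \delta_{kj}$; the only delicate case is $k=-j$, where the contribution from $H_\half$ cancels that from $H$ because $\langle\varphi_k,\varphi_{-k}\rangle_{1/2} = -|\mu_k|^2$ and $\langle\varphi_k,\varphi_{-k}\rangle_H = -1$, so the prefactor $1/[(i\mu_k)\overline{(i\mu_{-k})}] = -1/\mu_k^2$ turns the first term into $+1$, which then cancels against the second term. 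For totality, I would fix $(\xi,\eta)\in X$, expand each component on $(\varphi_m)_{m\in\nline}$, and solve the resulting $2\times 2$ linear system, yielding $c_{\pm m} = (i\mu_m\xi_m\pm\eta_m)/\sqrt 2$; Parseval then shows $\sum_{k\in\zline^*}|c_k|^2 = \|(\xi,\eta)\|_X^2$, which proves both convergence in $X$ and completeness.

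For the gap estimate I would start from the observation that the function $x\mapsto x\tanh x$ is $C^1$ and strictly increasing on $[1,\infty)$ with derivative $\tanh x + x\,\mathrm{sech}^2 x \ge \tanh 1 > 0$. The mean value theorem then yields $\mu_{k+1}^2 - \mu_k^2 \ge \tanh 1$ uniformly in $k\ge 1$, hence $\mu_{k+1} - \mu_k \ge (\tanh 1)/(2\mu_{k+1})$. Suppose $\o\ge 1$ and two distinct positive $\mu_k<\mu_j$ both lie in $[\o-\e/|\o|,\o+\e/|\o|]$; since $\mu_j\le|\o|+\e\le 2|\o|$ for $\e\le|\o|$, one has $\mu_j-\mu_k\ge(\tanh 1)/(4|\o|)$, which exceeds the interval length $2\e/|\o|$ for any fixed $\e<(\tanh 1)/8$. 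The case $\o\le -1$ is identical by the antisymmetry $\mu_{-k}=-\mu_k$, while the mixed-sign case is ruled out because $\mu_1$ and $\mu_{-1}$ are separated by $2\sqrt{\tanh 1}$, which is larger than $2\e$ for any such $\e$.

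The main obstacle is not any individual calculation but converting the uniform lower bound on $\mu_{k+1}^2-\mu_k^2$ into a spacing uniform in $\o$; this is essentially bookkeeping, and the underlying analytic content is just that $\tanh k\to 1$ exponentially fast, so the consecutive gaps $\mu_{k+1}-\mu_k$ behave asymptotically like $1/(2\sqrt k)$, which is exactly the rate $1/|\o|$ appearing in the statement.
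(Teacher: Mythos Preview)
Your proposal is correct. For the eigenvector/basis assertions you unpack by hand what the paper simply cites from \cite[Section~3.7]{Obs_Book}; your direct computations (including the $k=-j$ cancellation and the explicit Parseval identity) are accurate and make the argument self-contained.

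For the gap assertion your route genuinely differs from the paper's. The paper argues by contradiction: assuming the claim fails, it extracts sequences $(\omega_n)$, $(k_n)$ with $\mu_{k_n}(\mu_{k_n+1}-\mu_{k_n})\to 0$, and then derives a contradiction from the asymptotic $\lim_{k\to\infty}\mu_k(\mu_{k+1}-\mu_k)=\tfrac12$. You instead give a direct quantitative bound: from the mean value theorem applied to $x\mapsto x\tanh x$ you obtain $\mu_{k+1}^2-\mu_k^2\ge\tanh 1$, hence $\mu_{k+1}-\mu_k\ge(\tanh 1)/(2\mu_{k+1})$, and then check that any $\e<(\tanh 1)/8$ works. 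Your approach is slightly longer but more informative, since it produces an explicit $\varepsilon$ rather than an existential one; the paper's contradiction argument is shorter but non-constructive. Both rest on the same underlying fact that $\mu_{k+1}-\mu_k\asymp 1/\sqrt{k}\asymp 1/\mu_k$. One cosmetic point: where you write $\mu_j\le|\omega|+\varepsilon$, the actual bound is the sharper $\mu_j\le|\omega|+\varepsilon/|\omega|$, but this only helps your estimate.
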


\begin{proof}
According to Proposition \ref{prop_def_DN}, the family
$(\varphi_k)_{k\in\nline}$ defined in \rfb{mare_definitie} is an
orthonormal basis in $H$ formed of eigenvectors of $A_0$ and for each
$k\in\nline$, $A_0\varphi_k=\l_k\varphi_k$, where $(\l_k)_{k\in
\nline}$ have been defined in \rfb{valpr0}. Using the structure
\rfb{DEFAB2} of $A$ and a classical result (see, for instance,
\cite[Section 3.7]{Obs_Book}), it follows that $A$ is diagonalizable,
with the eigenvalues $({\rm i}\mu_k)_{k\in\zline^*}$ corresponding to the orthonormal basis of eigenvectors $(\phi_k)_{k\in\zline^*}$.

Next we prove the last assertion in the lemma by a contradiction
argument. Note that for $k\in\nline$, $\mu_k\approx\sqrt{k}$, with 
exponentially vanishing approximation error. If we assume that the 
assertion in the lemma is false, we obtain the existence of a 
positive sequence $(\o_n)$ with $\o_n\to\infty$ and of a sequence 
$(k_n)$ in $\nline$ with $k_n\to\infty$ such that
\begin{equation} \label{mare_cacat}
   \left\{\mu_{k_n},\ \mu_{k_n+1}\right\} \m\subset\m \left[\o_n-
   \frac{1}{n\o_n},\ \o_n+\frac{1}{n\o_n}\right].
\end{equation}
This implies that \m $\lim_{n\to\infty}\o_n(\mu_{k_n+1}-\mu_{k_n})=
0$. Combining this fact with \rfb{mare_cacat}, it follows that
\vspace{-2mm}
\begin{equation} \label{mare_cacat_bis_bis}
   \lim_{n\to\infty} \mu_{k_n} (\mu_{k_n+1}-\mu_{k_n}) \m=\m 0.
\end{equation}
On the other hand, it is not difficult to check, using \rfb{valpr0}, 
that
$$ \lim_{k\to\infty} \mu_k\ (\mu_{k+1}-\mu_k) \m=\m \half \m,$$
which clearly contradicts \rfb{mare_cacat_bis_bis} and thus ends
the proof. \end{proof}

We are now in a position to prove our main stabilizability result.

\begin{proof}[Proof of Theorem \ref{th_mergebine}]
$1.$ The first assertion follows directly from Curtain and Zwart
\cite[Theorem 5.2.6]{CZ_THE_BOOK}, since $A$ has infinitely many 
unstable eigenvalues. Alternatively, we can apply the main result of
Gibson \cite{Gibson} or Guo, Guo and Zhang 
\cite[Theorem 3]{guo2007lack}.

$2.$ To prove the second assertion, notice that, since the adjoint 
of the operator $B$ defined in \rfb{defbsingur} is $B^*=\bbm{0 & 
B_0^*}$, we see from \rfb{adj_con} and \rfb{Turkey} that
$$ B^*\phi_k \m=\m \frac{-1}{\sqrt{2}} \int_{-1}^0 h(y) \overline
   {(C_0\varphi_k)(y)} \dd y \FORALL k\in\zline^* \m,$$  
where $C_0=\gamma_0 D$. Using \rfb{Dformula}, we get from the above
that
\begin{equation} \label{B*phi}
   B^*\phi_k \m=\m -\frac{1}{\sqrt{\pi}} \int_{-1}^0 h(y)\frac
   {\cosh{[k(y+1)]}}{\cosh{k}} \dd y \FORALL k\in\zline^* \m.
\end{equation}

Assume now that $h$ is a strategic profile, i.e., \rfb{B*phi_start}
holds. Then clearly
$$B^*\phi_k \m\neq \m 0 \FORALL k\in\zline^*.$$
According to \cite[Proposition 6.9.1]{Obs_Book} the pair $(A^*,B^*)$
is approximately observable in infinite time (we have used that the 
eigenvalues of $A$ are distinct). Now it follows from the main result
of Benchimol \cite{benchimol1978note} that the semigroup generated by
$A-BB^*$ is strongly stable.

Conversely, let us assume that $h$ is not a strategic profile, i.e.,
that $h$ does not satisfy assumption \rfb{B*phi_start}. Then from
\rfb{B*phi} there exists a $k\in\nline$ such that $B^*\phi_k=0$. 
Since $A^*=-A$, it follows that for every $F\in\Lscr(X,U)$,
$$(A^*+F^*B^*)\phi_k \m=\m -{\rm i}\mu_k \phi_k \m.$$
Let $\tline^{cl}$ denote the semigroup generated by $A+BF$, we have
$$ \left( \tline_t^{cl} \right)^* \phi_k \m=\m {\rm e}^{-{\rm i}
   \mu_k t} \phi_k \FORALL t\geq 0,$$
which implies that $\left|\langle\tline_t^{cl}\phi_k,\phi_k\rangle
\right|=1$ for all $t\geq 0$, so that $\tline^{cl}$ is not
strongly stable. We have thus shown that if $(A,B)$ is strongly
stabilizable, then $h$ satisfies \rfb{B*phi_start}, which ends the
proof of the second assertion.

$3.$ To prove the third assertion, first we notice that by combining
\rfb{gasitabine} and \rfb{B*phi} it follows that there exists
$M_0>0$ such that
\begin{equation} \label{borne_inf}
   \left|B^*\phi_k\right| \m\geq\m \frac{M_0}{|k|} \FORALL
   k\in\zline^*.
\end{equation}
We introduce, for every $s\in\rline$ and $\delta>0$, the
vector space $WP_{s,\delta}(A)$, called {\em wave
package of frequency $s$ and width $\delta$ associated to the
operator $A$}, which is defined by
$$ WP_{s,\delta}(A) \m=\m \begin{cases}
   \{0\} \ \ \ \ \mathrm{if}\ |\mu_k-s|\geq \delta \ \mathrm{for\
   all}\ k\in\zline^* \m,\\ \text{span}\ \left\{\phi_k\big|~
   k\in\zline^* \ \text{and}\ \ |\mu_k-s|<\delta \right\} \ \
   \mathrm{else.} \end{cases}$$
According to Lemma \ref{lema_spectrala} there exists $\e>0$ such
that, setting
\begin{equation} \label{def_delta}
   \delta(s) \m=\m \frac{\e}{|s|+1} \FORALL s\in\rline,
\end{equation}
we either have that $WP_{s,\delta(s)}(A)=\{0\}$ or
$$ WP_{s,\delta(s)}(A) \m=\m \text{span}\ \{\phi_{k(s)}\},$$
where $k(s)$ is the unique element of $\zline^*$ such that
$$s-\delta(s) \m<\m \mu_{k(s)}< s + \delta(s) .$$
Using the fact that $\mu_k=\sqrt{k\tanh(k)}$ and $\mu_{-k}=-\mu_k$
for $k\in\nline$, together with \rfb{borne_inf}, it follows that
there exists $M_1>0$ such that
$$ |B^*\phi| \m\geq\m \frac{M_1}{(|s|+1)^2} \|\phi\|_X
   \FORALL \phi\in WP_{s,\delta(s)}(A),\ s\in\rline.$$
We have thus obtained that the pair $(A,B)$ satisfies the assumptions
of Theorem 1.1 in \cite{chill2019non} with $\delta$ given by
\rfb{def_delta} and
$$ \gamma(s) \m=\m \frac{M_1}{(|s|+1)^2} \FORALL s\in\rline.$$
We can apply Theorem 1.1 in \cite{chill2019non} to conclude that the
semigroup $\tline^{cl}$ generated by $A-BB^*$ satisfies 
\rfb{final_estimate}. \end{proof}

\begin{remark} {\rm
For the proof of the second assertion we could use (instead of 
Benchimol \cite{benchimol1978note}) the stronger result of Batty and Vu \cite{batty1990stability}, where $A$ generates a contraction semigroup and $B$ is still bounded. An even more general result is in the recently published \cite{Ruth_dead}, where $A$ generates a contraction semigroup and $B$ may be very unbounded (not even admissible).}
\end{remark}

\section*{Acknowledgements}

The authors would like to thank Professor David Lannes (from
Universit\'e de Bordeaux) for valuable advice on this paper. The
authors are members of the ETN network ConFlex, funded by the European
Union's Horizon 2020 research and innovation programme under the Marie
Sklodowska-Curie grant agreement no. 765579. The second author is also
supported by the SingFlows project, grant ANR-18-CE40-0027 of the
French National Research Agency.

\bibliographystyle{elsarticle-num}
\bibliography{Su_Tucsnak}

\end{document}